\title{ $P_1$-nonconforming  divergence-free finite element method 
on square meshes for Stokes equations 
}
\author{
Chunjae Park
\thanks{Department of Mathematics,
Konkuk University, Seoul 05029, Korea, 
\hspace{1mm} Emails: cjpark@konkuk.ac.kr}}
\begin{document}

\newtheorem{theorem}{Theorem}[section]
\newtheorem{remark}[theorem]{Remark}
\newtheorem{lemma}[theorem]{Lemma}
\newtheorem{proposition}[theorem]{Proposition}
\newtheorem{definition}[theorem]{Definition}
\newtheorem{assumption}{Assumption}[section] 

\def\disp{\displaystyle}
\def\<{\left\langle}
\def\>{\right\rangle}
\def\div{\mathrm{div}\hspace{0.5mm}}
\def\divh{\mathrm{div}_h\hspace{0.5mm}}
\def\curl{\mathrm{curl}\hspace{0.5mm}}
\def\curlh{\mathrm{curl}_h\hspace{0.5mm}}
\def\bcurl{\mathbf{curl}\hspace{0.5mm}}

\def\CR{\mathcal{CR}}

\def\R{\mathbb R}
\def\O{\Omega}
\def\Ob{\overline{\Omega}}
\def\Oh{\widehat\Omega}
\def\Ohh{\Oh_h}
\def\Qt{\widetilde{Q}}
\def\p{\partial}
\def\Th{\mathcal{T}_h}
\def\TM{\mathcal{T}_M}
\def\T2h{\mathcal{T}_{2h}}
\def\TTh{\mathscr{T}_h}
\def\N{\mathcal{N}}
\def\Rh{\mathcal{R}_h}
\def\Kh{\mathcal{K}_h}
\def\Ro{\mathcal{R}^{\circ}}
\def\Ko{\mathcal{K}^{\circ}}
\def\Ih{\mathcal{I}_h}

\def\Qhz{\mathcal{Q}_{1,h}(\O)}
\def\Xh{[\NChz]^2}
\def\Mhz{\mathcal{P}_{0,h}(\O)}
\def\Mh{\mathcal{P}'_{0,h}(\O)}
\def\hph{\hat{p_h}}
\def\NC{\mathcal{NC}}
\def\NCh{{\mathcal{NC}}^h(\O)}
\def\NChz{{\mathcal{NC}}^h_0(\O)}
\def\Chz{{\mathcal C}^h_0(\O)}
\def\B{\mathcal{B}}
\def\xt{\hat{x}}
\def\yt{\hat{y}}
\def\lam{\lambda}
\def\bQ{\mathbf{b}_Q}
\def\nQ{\mathbf{n}_Q}
\def\Qrt{Q^{rt}}
\def\Qlt{Q^{lt}}
\def\Qlb{Q^{lb}}
\def\Qrb{Q^{rb}}
\def\Vrt{\V^{rt}}
\def\Vlt{\V^{lt}}
\def\Vlb{\V^{lb}}
\def\Vrb{\V^{rb}}
\def\b{\mathbf{b}}
\def\n{\mathbf{n}}
\def\sq{\square}

\def\u{\mathbf{u}}
\def\v{\mathbf{v}}
\def\bu{\mathbf{u}}
\def\bv{\mathbf{v}}
\def\bw{\mathbf{w}}
\def\V{\mathbf{V}}
\def\C{\mathbf{C}}
\def\vh{\mathbf{v}_h}
\def\wh{\mathbf{w}_h}
\def\uh{\mathbf{u}_h}
\def\zh{\mathbf{z}_h}
\def\Psih{\mathbf{\Psi}_h}
\def\pih{\pi_h}
\def\Pih{\Pi_h}
\def\Honez{H_0^1(\Omega)}
\def\alp{\alpha}
\def\bet{\beta}
\def\gam{\gamma}
\def\del{\delta}
\def\ori{\mathbf{0}}
\def\sh{\sqrt{h}}
\def\x{\mathbf{x}}
\def\y{\mathbf{y}}
\def\z{\mathbf{z}}
\def\pron{\Pi_h}
\def\eps{\epsilon}

\def\NChzvec{[\NChz]^2}
\def\Wtinfvec{[W^{2,\infty}(\O)]^2}
\def\vtwonorm{ \snorm{\v}{[W^{2,\infty}(\O)]^2}}
\def\curlvnorm{ \Lnorm{\curl\v}{\O}}
\def\c{\mathbf{c}}
\def\rhomin{\rho_{\min}}
\def\cc{\mathscr{P}}
\def\S{\mathscr{F}}
\def\f{\mathbf{f}}
\def\Lip{\mbox{\textnormal{\tiny{Lip}}}}

\def\Pht{\widehat{P_h}(\Omega)}
\def\pht{\hat{p_h}}
\def\qht{\hat{q_h}}
\def\qhx{q_h^{\mbox{x}}}
\def\Phx{P_h^{\mbox{x}}(\Omega)}
\def\phb{p_h^{\mbox{x}}}
\def\Qh{\widetilde{Q}}
\def\VV{\mathcal{V}}
\def\HH{\mathcal{H}}
\def\XX{\mathcal{X}}
\def\bh{\mathbf{b}_h}
\def\phl{p_h}
\def\Wpho{\widetilde{P_h}(\O)}

\newcommand{\bff}{\mathbf{f}}

\newcommand {\snorm}[2] {| #1 |_{#2}}
\newcommand {\Hnorm}[2] {| #1 |_{#2}}
\newcommand {\norm}[2] {\| #1 \|_{#2}}

\def\nx{\n_{\x}}
\def\k{\mathbf{k}}
\def\ble{\ \le \ }
\def\bge{\ \ge \ }
\def\beq{\ = \ }
\def\ssubset{\ \subset\ }
\def\CO{C_{\O}}
\newcommand{\gCQ}[1]{\big(\C(Q^{#1})\big)}
\newcommand {\squad}{\hspace{2mm}}

\def\mmskip{\vspace{1mm}}
\def\tele{\mathbf{w}}

\maketitle
\begin{abstract}
Recently, 
the $P_1$-nonconforming finite element space over square meshes has been proved 
stable to solve Stokes equations with the piecewise constant space 
for velocity and pressure, respectively.
In this paper, we will introduce its locally divergence-free subspace 
to solve the elliptic problem for the velocity only
decoupled from the Stokes equation.
The concerning system of linear equations is much smaller 
compared to the Stokes equations.
Furthermore, it is split into two smaller ones.
After solving the velocity first,
the pressure in the Stokes problem can be obtained by an explicit method very rapidly.
\end{abstract}

\section{Introduction}
A divergence-free vector field frequently appears in various  
mathematical and engineering problems
such as an incompressible flow in the Navier-Stokes equation or
a solenoidal magnetic induction in the Maxwell equations or
the limit of displacements in elasticity equations when Poisson's ratio goes to 1/2.

An incompressible Stokes problem can be reduced to
an elliptic problem for the velocity only in the divergence-free space 
\cite{Brezzi1991, GR, Pironneau}.
The locally divergence-free subspace of $[\CR]^2$
 was used for finite element methods
to solve that elliptic problem  \cite{Brezzi1991, thomasset},
where $\CR$ is the Crouzeix-Raviart $P_1$-nonconforming 
finite element space on triangular meshes.
It have also been adopted for the time-harmonic Maxwell equations \cite{Brenner2007}.

It contains enough interpolants 
to approximate continuous divergence-free functions in $[H^2]^2$,
since 
it can be interpreted as the curl of the
Morley element.
If the domain is simply connected in $\R^2$,
its dimension is the number of interior vertices and edges \cite{Hecht, thomasset},
which is about two third of that of $[\CR]^2$.

A conforming locally divergence-free space whose elements are 
piecewise linear can be constructed with the curls of $C^1$-Powell-Sabin elements
 on triangular meshes for biharmonic problems \cite{Powell}.
We can find how to construct the  locally divergence-free subspace 
for various finite element spaces \cite{Ye1992, Ye1997}.
Instead of working with divergence-free spaces, some researchers have developed 
 finite element methods for Stokes equations
 whose velocity solutions are resulted divergence-free 
\cite{Zhang2011, Zhang2009}
as well as locally divergence-free discontinuous Galerkin methods
\cite{Cockburn2004, Cockburn2007}, multigrid methods \cite{Austin}
and isogeometric analysis \cite{Buffa, Evans}.

In this paper, we are interested in the locally divergence-free subspace 
of $[\NC]^2$, the $P_1$-nonconforming finite element space on square meshes. 
The space $\NC$ consists of functions which are linear in each square 
and continuous on each midpoint of edge \cite{parkthesis, p1quad}.
Recently, it has been proved that  $[\NC]^2$
is stable to solve Stokes equations with the piecewise constant space 
for velocity and pressure, respectively \cite{kim2016}.

We will apply the locally divergence-free subspace
to solve the elliptic problem for the velocity only,
reduced from the incompressible Stokes problem.
The concerning system of linear equations is much smaller than
that of the Stokes equation. 
Furthermore, if we divide the squares in the mesh into the red and black squares 
like a checkerboard, the curl of divergence-free element has
its support in red squares only, otherwise black ones only.
Thus, the system from the elliptic problem is split into two
independent smaller ones.
After solving the velocity first,
the pressure in the Stokes problem 
will be obtained by an explicit method very rapidly.

The paper is organized as follows.
In the next section the $P_1$-nonconforming finite element space
on quadrilateral meshes will be briefly reviewed.
Then, restricted on  square meshes, 
we will devote section \ref{sec:divfree} to characterizing
its locally divergence-free subspace as well as a basis. 
In section \ref{sec:Stokes},
 the reduced elliptic problem for the velocity
and an explicit method for the pressure in the Stokes problem 
are stated, respectively.
Finally, some numerical tests will be presented in the last section.

Throughout the paper, 
$C_S$ is a generic notation for a positive constant which depends only on $S$.

\section{ $P_1$-nonconforming quadrilateral finite element}
Let $\O$ be a simply connected polygonal domain in $\R^2$ with
a triangulation $\Th$ which consists of uniform squares of width and
height $h$ as in Figure \ref{fig:Th}. 
For a vertex or edge in $\Th$, we call them a boundary vertex or edge 
if they belong to $\p\O$, otherwise, an interior vertex or edge. 

The $P_1$-nonconforming quadrilateral finite element
spaces \cite{parkthesis, p1quad} are defined by  
\vspace{1mm}
\begin{equation*}
\begin{array}{c}
\vspace{1mm}
\NCh = \{v_h\in L^2(\O)\ :\  v_h|_Q \in\, <1,x,y >
 \mbox{ for all squares } Q\in\Th,\,
\qquad\qquad\qquad\qquad \\ 
\vspace{3mm}
\hspace{4cm}
 v_h \mbox{ is continuous at every midpoints of edges in }
 \Th \},\\
\NChz= \{v_h\in \NCh \ : \ v_h(m) = 0 \text{ for all midpoints } m 
\mbox{ of boundary edges in } \Th \}.
\end{array}
\end{equation*} 

\begin{figure}[ht]
\hspace{4.8cm}
\includegraphics[width=0.4\textwidth]{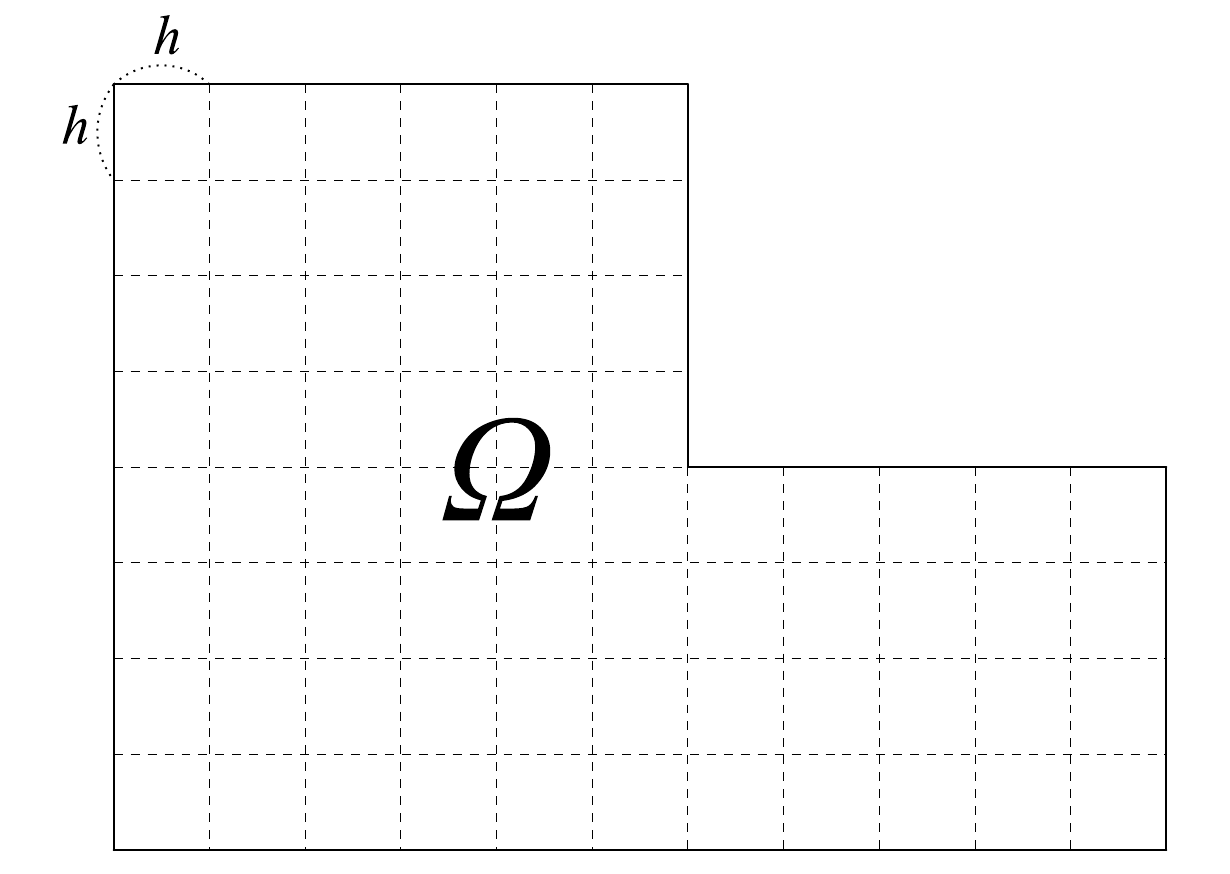}
\caption{Triangulation $\Th$ of $\Ob$ into uniform squares
}\label{fig:Th}
\end{figure} 
\vspace{2mm}

\noindent
When we assign a value $c_m$ for the midpoint $m$ of each edge in $\Th$, 
 there exists $v_h\in\NCh$ such that
$v_h(m)=c_m$ at all midpoints $m$ of edges in $\Th$ if and only if
\begin{equation}\label{eq:condnch}
c_{m_1}+c_{m_3} = c_{m_2}+c_{m_4},
\end{equation}
whenever $ m_1,\cdots,m_4$ are of clockwisely numbered edges of 
a square $Q\in \Th.$

\mmskip
For each vertex $\V$ in $\Th$, define a function $\psi^{\V}\in \NCh$ by
its values at all midpoints $m$ of edges in $\Th$ such that
\begin{equation}\label{def:psiV}
\psi^{\V}(m)=\left\{\begin{array}{cl}
1, & \mbox{ if } m \mbox{ belongs to an edge which meets } \V, \\
0, & \mbox { otherwise. }
\end{array}\right.
\end{equation}
Note $\psi^\V \in\NCh$ is well defined since its values at the midpoints satisfy
the condition \eqref{eq:condnch}.
Then, we have a basis for $\NChz$ as 
\begin{equation}\label{eq:spannchz}
 \NChz=\mbox{Span}\{ \psi^{\V} \in \NChz \ : \ \V 
\mbox{ is an interior vertex in } \Th \}. 
\end{equation}
Hence, the dimension of $\NChz$ is the number of interior vertices in $\Th$
\cite[Theorem 2.5]{p1quad}.
  
Define an interpolation $\pih: H^1_0(\O)\cap C(\O)\rightarrow \NChz$ as
\begin{equation}\label{def:pih}
 \pih v = \sum_{\V} \frac{v(\V)}2 \psi^{\V},\quad \forall v\in H^1_0(\O)\cap C(\O),   
\end{equation}
where the summation runs over all interior vertices $\V$ in $\Th$. 
We note $\pih v$ satisfies
 \begin{equation}\label{eq:pih}
 \pih v \Big(  \frac{\V_1+\V_2}2 \Big) \beq \frac{ v(\V_1)+v(\V_2)}2 \quad 
\mbox{ for all adjacent vertices } \V_1, \V_2 \mbox{ in } \Th.
\end{equation}
Then, the interpolation error is estimated by
\begin{equation*}\label{eq:ncintpolerr}
  \norm{v -  \pih v}{L^2(\O)} + h \snorm{v -  \pih v}{1,h}
\ble \CO\, h^2 \snorm{v}{H^2(\O)}, \quad  
\forall v\in H^1_0(\O)\cap H^2(\O),
\end{equation*}
where 
$\snorm{\Box}{1,h}$ denotes 
the mesh-dependent discrete $H^1$-seminorm such that
\begin{equation}\label{def:dnorm}
 |\Box|_{1,h} = \left[\sum_{Q\in\Th} \int_Q |\nabla \Box|^2\,dxdy \right]^{1/2}. 
\end{equation}

Let  $\Qhz$ be the conforming piecewise bilinear space  over $\Th$ as 
\begin{equation*}
 \Qhz =\{ w_h\in H_0^1(\O)\cap C(\O)\ :\  
w_h|_Q \in\, <1,x,y,xy> \mbox{ for all squares } Q \in\Th\},
\end{equation*}
with an assumption that $\p\O$ consists of segments parallel to
$x,y$-axes. Two lowest order finite element spaces 
$\Qhz$ and $\NChz$ are isomorphic as in the following lemma.
\begin{lemma}\label{lem:isoQ1P1}
Let $\pih: \Qhz \rightarrow \NChz$
be the  operator as in \eqref{def:pih}.
Then $\pih$ is a bijection.
\end{lemma}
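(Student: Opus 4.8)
The plan is to exploit the fact that $\Qhz$ and $\NChz$ have the same finite dimension, namely the number of interior vertices of $\Th$, and then to reduce the bijectivity of the linear map $\pih$ to a single injectivity (equivalently surjectivity) check.

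First I would pin down the dimension of each space. For $\NChz$ this is already recorded: by \eqref{eq:spannchz} the functions $\psi^\V$ indexed by interior vertices $\V$ form a basis, so $\dim\NChz$ equals the number $N$ of interior vertices in $\Th$. For $\Qhz$ I would argue that a continuous piecewise bilinear function is uniquely determined by, and can be prescribed arbitrarily through, its nodal values at the vertices of $\Th$: along any shared edge of two adjacent squares a bilinear restriction $a+bx+cy+dxy$ is affine in the edge variable, so matching the two endpoint values forces continuity across that edge. With the homogeneous boundary condition, the free nodal values are precisely those at the interior vertices, whence $\dim\Qhz = N$ as well. This also confirms that $\pih$ really maps $\Qhz$ into $\NChz$, since each $\psi^\V$ with $\V$ interior lies in $\NChz$.

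Since $\pih$ is linear between spaces of equal finite dimension $N$, it is a bijection as soon as it is injective. To see injectivity, suppose $\pih v = 0$ for some $v\in\Qhz$. By \eqref{def:pih} we have $\pih v = \sum_\V \frac{v(\V)}2\,\psi^\V$, and because the $\psi^\V$ are linearly independent the coefficients must all vanish, giving $v(\V)=0$ at every interior vertex $\V$. Combined with $v=0$ on $\p\O$, the function $v$ then vanishes at every vertex of $\Th$, and by the nodal characterization above $v\equiv 0$. Hence $\ker\pih=\{\ori\}$ and $\pih$ is bijective. (Equivalently, I could prove surjectivity directly: given $w_h=\sum_\V c_\V\psi^\V\in\NChz$, the piecewise bilinear function with nodal values $2c_\V$ at the interior vertices and $0$ on $\p\O$ belongs to $\Qhz$ and is carried by $\pih$ onto $w_h$.)

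I do not expect a serious obstacle here. The only point demanding care is the nodal-value characterization of $\Qhz$, that continuity across square edges is equivalent to agreement of vertex values, because this is exactly what secures both the dimension count and the closing implication $v(\V)=0\ \forall\V\Rightarrow v\equiv 0$. Everything else is routine bookkeeping with the basis $\{\psi^\V\}$ and the scaling by $\tfrac12$ in the definition of $\pih$.
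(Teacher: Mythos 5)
Your proof is correct, and its skeleton is the same as the paper's: $\pih$ is linear between spaces of equal finite dimension (the number of interior vertices), so injectivity suffices. Where you diverge is in how injectivity is obtained. The paper works directly with midpoint values: if $\pih v$ vanishes at every edge midpoint, then by \eqref{eq:pih} the endpoint values of $v$ sum to zero on every edge; starting from interior edges touching $\p\O$ (where one endpoint value is already zero) and sweeping outward, $v$ is forced to vanish at all vertices. You instead read $\pih v=\sum_{\V}\tfrac{v(\V)}{2}\psi^{\V}=0$ against the linear independence of $\{\psi^{\V}\}$ to conclude $v(\V)=0$ at once. These are really the same computation in different clothing: linear independence of the $\psi^{\V}$ amounts to the statement that $c_{\V_1}+c_{\V_2}=0$ on every edge (with boundary coefficients zero) forces all $c_{\V}=0$, and the paper's sweeping-out argument is precisely the proof of that implication. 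So your version is shorter but defers the real work to the assertion in \eqref{eq:spannchz} that the $\psi^{\V}$ form a basis, which the paper states without proof (citing its references) and otherwise uses only for the dimension count; the paper's proof is self-contained on this point. Your explicit nodal characterization of $\Qhz$ (bilinear restrictions are affine on edges, so vertex values determine the function and give $\dim\Qhz=N$) is a welcome addition that the paper leaves implicit.
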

\begin{proof}
Let $\pih v\in\NChz$ vanish at all midpoints of edges in $\Th$ for some $v\in\Qhz$.
Then,
$v$ vanishes on each interior edges $E$ in $\Th$ 
which meets $\p \O$, since $v$ is linear on $E$ and 
 vanishes at the two endpoints of $E$  by \eqref{eq:pih}.

By a sweeping out argument, we conclude that $v$
vanishes on all edges in $\Th$. This means
that $\pih$ is a bijection
since the dimensions of $\Qhz$ and $\NChz$ are same 
as the number of interior vertices in $\Th$.
\end{proof}

For a vector valued function $\v=(v_1, v_2)$, the component-wise interpolation and 
discrete $H^1$-seminorm   from \eqref{def:pih} and \eqref{def:dnorm} will be used  as  
\begin{equation*}\label{eq:vecvers}
 \Pih\v = (\pih v_1, \pih v_2), \quad \snorm{\v}{1,h} 
=\Big( \snorm{v_{1}}{1,h}^2 
+\snorm{v_{2}}{1,h}^2 \Big)^{1/2}.
\end{equation*}
Let $\divh, \curlh$ be the mesh-dependent discrete divergence and curl as
\begin{equation*}
(\divh \v)\big|_Q  =  \div (\v|_Q) , \quad (\curlh \v)\big|_Q =  \curl (\v|_Q), 
\quad \forall Q\in\Th.
\end{equation*}

It is well known there is a constant $\CO$ such that \cite{GR}
\begin{equation}\label{ieq:convers}
\snorm{\v}{[H^1(\O)]^2} 
\le \CO ( \norm{\div\v}{L^2(\O)} +  \norm{\curl\v}{L^2(\O)} ),
\quad \forall\v\in [H^1_0(\O)]^2.
\end{equation}
We have a similar result to \eqref{ieq:convers} for $\NChzvec$
 in the following lemma.  
\begin{lemma} \label{lem:normdecom}
For all $\v_h\in\NChzvec$, we have
\begin{equation*}
\snorm{\v_h}{1,h}^2 = 
 \norm{\divh \v_h}{L^2(\O)}^2 + \norm{\curlh \v_h}{L^2(\O)}^2.
\end{equation*}
\end{lemma}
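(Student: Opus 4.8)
The plan is to begin from the pointwise algebraic identity, valid on each square $Q$ on which $\vh=(v_1,v_2)$ is linear,
\[
|\nabla\vh|^2 = (\div\vh)^2 + (\curl\vh)^2 - 2\,\det\nabla\vh,
\qquad \det\nabla\vh := \p_x v_1\,\p_y v_2 - \p_y v_1\,\p_x v_2 .
\]
Integrating over each $Q$, summing over $\Th$, and recalling the definitions of $\divh$, $\curlh$ and of $\snorm{\cdot}{1,h}$ in \eqref{def:dnorm}, the asserted identity reduces to the single claim that the total Jacobian vanishes,
\[
\sum_{Q\in\Th}\int_Q \det\nabla\vh \,dx\,dy = 0 .
\]

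To exploit cancellation I would pass to a conforming companion. By Lemma~\ref{lem:isoQ1P1}, applied to each component, there is a unique $\bw\in[\Qhz]^2\subset[\Honez]^2$ with $\vh=\Pih\bw$. The bridge is the elementwise identity $\int_Q\det\nabla\vh = \int_Q\det\nabla\bw$ for every $Q\in\Th$, which I would prove from two exact facts on squares. First, a mean-gradient property: $\nabla\vh$ is constant on $Q$ and equals $\frac{1}{|Q|}\int_Q\nabla\bw$; this follows from \eqref{eq:pih}, because the edge-midpoint values of $\Pih\bw$ are the averages of the vertex values of $\bw$, and the gradient of a linear function on a square is recovered from its values at opposite edge midpoints. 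Second, for a bilinear $w_k$ the partial $\p_x w_k$ depends only on $y$ and $\p_y w_k$ only on $x$, so over the product domain $Q$ the products separate: $\int_Q \p_x w_1\,\p_y w_2 = |Q|\,\overline{\p_x w_1}\,\overline{\p_y w_2}$, and likewise for the other term, where the overbar is the mean over $Q$. Hence $\int_Q\det\nabla\bw = |Q|\det\big(\frac{1}{|Q|}\int_Q\nabla\bw\big)$, and by the mean-gradient property the right-hand side is exactly $\int_Q\det\nabla\vh$.

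With the elementwise identity in hand, $\sum_{Q}\int_Q\det\nabla\vh = \sum_Q\int_Q\det\nabla\bw = \int_\O\det\nabla\bw$. Since $\bw$ is continuous, I would compute this last integral by Green's theorem on each square, writing $\det\nabla\bw = \p_x(w_1\,\p_y w_2)-\p_y(w_1\,\p_x w_2)$ so that $\int_Q\det\nabla\bw = \oint_{\p Q} w_1\,(\bcurl w_2)\cdot\n\,ds$. Summing over $\Th$, the interior-edge contributions cancel in pairs: across an interior edge $w_1$ is continuous and $(\bcurl w_2)\cdot\n$ — being, up to orientation, the edge-tangential derivative of the single-valued trace $w_2|_E$ — reverses sign with the opposite outward normals of the two squares. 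Only $\p\O$ survives, and $\oint_{\p\O} w_1\,(\bcurl w_2)\cdot\n\,ds = 0$ because $w_1=0$ on $\p\O$. Thus the total Jacobian is zero, which with the first paragraph yields the stated identity.

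The heart of the proof is the elementwise Jacobian identity of the second step. Routing through $\bw$ is essential precisely because the pairwise cancellation above fails for $\vh$ itself: the tangential derivatives of the nonconforming components jump across interior edges, and a direct computation would leave a sum of interior-edge jump terms weighted by midpoint values whose vanishing is opaque. I expect the delicate point to be checking that both the mean-gradient property and the separation of variables hold with exact equality on square meshes — it is this exactness that produces the clean identity here, in contrast with the constant-dependent bound \eqref{ieq:convers}.
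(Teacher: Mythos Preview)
Your proof is correct and follows essentially the same route as the paper's: both pass to the conforming companion $\bw\in[\Qhz]^2$ via Lemma~\ref{lem:isoQ1P1}, establish an elementwise identity relating $\vh$ and $\bw$ (your $\int_Q\det\nabla\vh=\int_Q\det\nabla\bw$ is precisely the paper's explicit coordinate computation that $\snorm{\wh}{[H^1(Q)]^2}^2-\norm{\div\wh}{L^2(Q)}^2-\norm{\curl\wh}{L^2(Q)}^2$ coincides with the same expression for $\vh$), and then use integration by parts on the continuous $\bw$ to make the cross terms cancel globally. Your framing via the Jacobian determinant is a clean organizing principle, and your mean-gradient plus separation-of-variables argument is a tidy abstraction of the paper's direct expansion in \eqref{eq:dwh}--\eqref{eq:dvh}, but the content is the same.
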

\begin{proof}

For each $\wh=(w_{1}, w_{2}) \in [\Qhz]^2$, we have, by integration by parts,
\begin{equation*}
\begin{array}{ccl}
\disp\int_\O w_{1x}  w_{2y}\ dxdy &=& \disp\sum_{Q\in\Th}  \int_Q w_{1x}  w_{2y}\ dxdy   \\
&=&  \disp\sum_{Q\in\Th} \int_{\p Q} w_{1x} w_2 \nu_y \ ds -   \int_Q w_{1xy}  w_{2}\ dxdy 
= - \disp\sum_{Q\in\Th}     \int_Q w_{1xy}  w_{2}\ dxdy,
\end{array} 
\end{equation*}
since $\nu_y$ is nonzero only at the horizontal edges of $\p Q$ and $w_{1x}, w_2$ 
are continuous there.
The same argument is repeated to get
\begin{equation}\label{eq:w12xy}
\begin{array}{ccl}
 \disp\int_\O w_{1y}  w_{2x}\ dxdy 
&=&\disp \sum_{Q\in\Th} \int_{\p Q} w_{1y} w_2 \nu_x \ ds -  
 \int_Q w_{1yx}  w_{2}\ dxdy\\
&=& - \disp\sum_{Q\in\Th}  
   \int_Q w_{1yx}  w_{2}\ dxdy = \int_\O w_{1x}  w_{2y}\ dxdy.
\end{array}
\end{equation}
Then, from \eqref{eq:w12xy}, we can establish
\begin{equation}\label{eq:whhold}
\begin{array}{lll}
\vspace{2mm}
\snorm{\wh}{[H^1(\O)]^2}^2
&=&(w_{1x},w_{1x})+(w_{1y},w_{1y})+(w_{2x},w_{2x})+(w_{2y},w_{2y}) \\
\vspace{2mm}
&=&(w_{1x}+w_{2y}, w_{1x}+w_{2y}) +(w_{2x}-w_{1y},w_{2x}-w_{1y}) \\
&=& \norm{\div \wh}{L^2(\O)}^2 + \norm{\curl \wh}{L^2(\O)}^2.
\end{array}
\end{equation}

Now, if $\vh=(v_1, v_2)\in [\NChz]^2$,
there exists $\wh=(w_1, w_2)\in [\Qhz]^2$  by Lemma \ref{lem:isoQ1P1}  such that
\begin{equation*}\label{eq:dvi}  v_i = \pih w_i, \quad i=1,2. \end{equation*}
Given square $Q\in\Th$, assume that
\begin{equation}\label{eq:dwh}
 w_i(x,y)= \alp_i+ \bet_i \xt + \gam_i \yt + \del_i \xt\yt, 
\quad  i=1,2, 
\end{equation}
where $\xt=x-c_1,\ \yt=y-c_2$ for the center point $(c_1,c_2)$ of $Q$. Note 
\begin{equation}\label{eq:xhyh}
 \int_Q \xt \ dxdy = \int_Q \yt\ dxdy = \int_Q \xt\yt \ dxdy =0. 
\end{equation}

By \eqref{eq:pih}, $\pih \xt\yt$ vanishes at all midpoints of edges in $\Th$ 
since the values of $\xt\yt$ differ only by their signs
at every two endpoints of an edge of $Q$.
It means
 $v_i$ is the linear part of $w_i$ so that
\begin{equation}\label{eq:dvh}  
v_i(x,y)= \pih w_i(x,y)=\alp_i+ \bet_i \xt + \gam_i \yt , 
\quad \ i=1,2.
\end{equation}

From \eqref{eq:dwh}, \eqref{eq:xhyh}, \eqref{eq:dvh}, we expand that
\begin{equation*}
\begin{array}{l}
\mmskip
\snorm{\wh}{[H^1(Q)]^2}^2 - \norm{\div \wh}{L^2(Q)}^2 - \norm{\curl \wh}{L^2(Q)}^2 \\
\qquad\qquad = \disp\int_Q (\bet_1+\del_1\yt)^2+ (\gam_1+\del_1\xt)^2 +(\bet_2+\del_2\yt)^2
+ (\gam_2+\del_2\xt)^2 \ dxdy \\
\qquad\qquad\qquad
 \disp -\int_Q (\bet_1+\del_1\yt + \gam_2+\del_2\xt)^2 + (\bet_2+\del_2\yt -\gam_1 -\del_1\xt)^2 
\ dxdy \\
\mmskip
\qquad\qquad=\disp\int_Q \bet_1^2+ \gam_1^2 +\bet_2^2+ \gam_2^2 - (\bet_1 + \gam_2 )^2 - 
(\bet_2 -\gam_1)^2 \ dxdy \\  
\qquad\qquad=\snorm{\vh}{[H^1(Q)]^2}^2 - \norm{\div \vh}{L^2(Q)}^2 - \norm{\curl \vh}{L^2(Q)}^2.
\end{array}
\end{equation*}
It completes the proof, since $\wh$ satisfies \eqref{eq:whhold}. 
\end{proof}

\section{$P_1$- nonconforming divergence-free space}\label{sec:divfree}
Let $V_h$ be a locally divergence-free subspace of $\Xh$ as 
\begin{equation}\label{eq:divh0}
V_h=\{\v_h\in [\NChz]^2\ :\ \divh \v_h =0  \}. 
\end{equation}

Throughout the remaining of the paper, 
in order to exclude a pathological triangulation $\Th$ such as a ladder
or a union of two rectangles whose intersection is merely one square or edge in $\Th$,
we assume that
\begin{assumption}\label{assumth}
No square has 4 boundary vertices. No interior edge meets 2 boundary vertices. 
If a square has only two boundary vertices, 
they are the two endpoints of one edge.
\end{assumption}

Let $\mathcal{P}_{0,h}(\O)$ be a space of piecewise constant functions as
\[\mathcal{P}_{0,h}(\O) =\{q_h \in L^2(\O)\ :\ q_h|_Q = <1> \mbox{ for all } Q\in\Th\}. \]
The value of a function $q_h\in \Mhz$
at a square $Q\in\Th$ will be abbreviated to $q_h(Q)$.

For an interior vertex $\V$, let 
$Q_1, Q_2, Q_3, Q_4$ be the squares in $\Th$ which meet $\V$, 
counterclockwisely numbered from the square whose left bottom vertex is
$\V$ as in Figure \ref{fig:psivab}-(a).
Using the scalar basis function $\psi^\V\in \NChz$ in \eqref{def:psiV}, 
for a vector value $(a,b)$,
define a function $\psi^{\V}[a,b] \in [\NChz]^2$ by its values at
all midpoints $m$ of edges in $\Th$ such that
\begin{equation*}
\psi^{\V}[a,b](m)=\left\{\begin{array}{cl}
(a,b), & \mbox{ if } m \mbox{ belongs to an edge which meets } \V, \\
(0,0), & \mbox { otherwise. }
\end{array}\right.
\end{equation*}
We can easily check that 
$\divh \psi^{\V}[a,b],\ \curlh \psi^{\V}[a,b] \in \Mhz$
have nontrivial values at only 4 squares in $\Th$, 
given as in Figure \ref{fig:psivab}-(b), (c),
\begin{equation}\label{eq:divvab}
\divh\psi^{\V}[a,b](Q_j) =
\left\{\begin{array}{rl}
-(a+b)/h  \quad &\mbox{ if } j=1, \\   
(a-b)/h  \quad &\mbox{ if } j=2, \\   
(a+b)/h  \quad &\mbox{ if } j=3, \\   
(b-a)/h  \quad &\mbox{ if } j=4, 
\end{array}
\right.
\end{equation}
\begin{equation}\label{eq:curlvab}
\curlh\psi^{\V}[a,b](Q_j) =
\left\{\begin{array}{rl}
(a-b)/h  \quad &\mbox{ if } j=1, \\   
(a+b)/h  \quad &\mbox{ if } j=2, \\   
(b-a)/h  \quad &\mbox{ if } j=3, \\   
-(a+b)/h  \quad &\mbox{ if }j=4. 
\end{array}
\right.
\end{equation}
\def\psivabxxx{\psi^{\V}[a,b]}
\begin{figure}[hb]
\hspace{20mm}
\subfigure[ $\psivabxxx$  ]{
\includegraphics[width=0.18\textwidth]{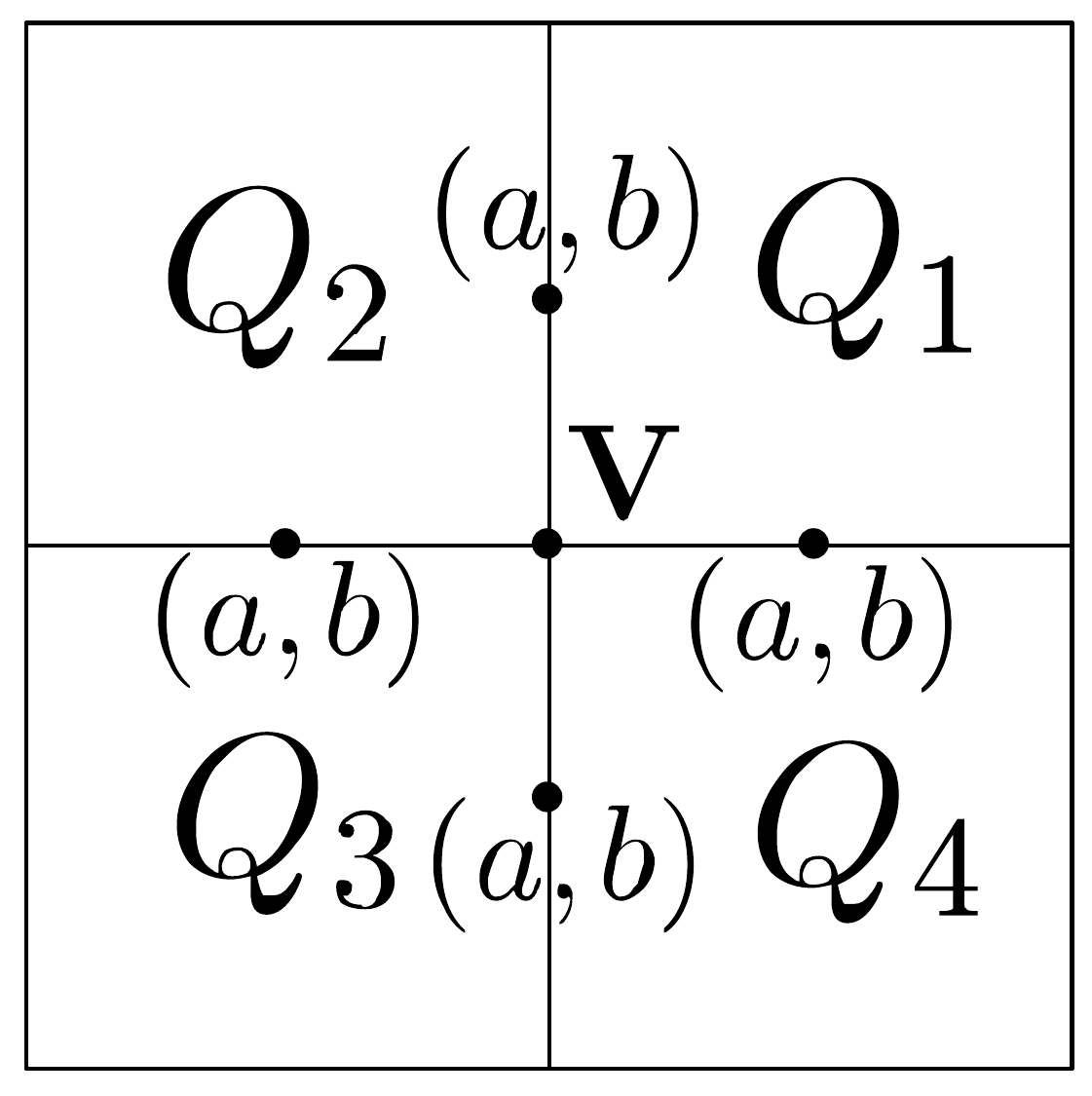}
}
\qquad
\subfigure[$\divh \psivabxxx$ ]{
\includegraphics[width=0.18\textwidth]{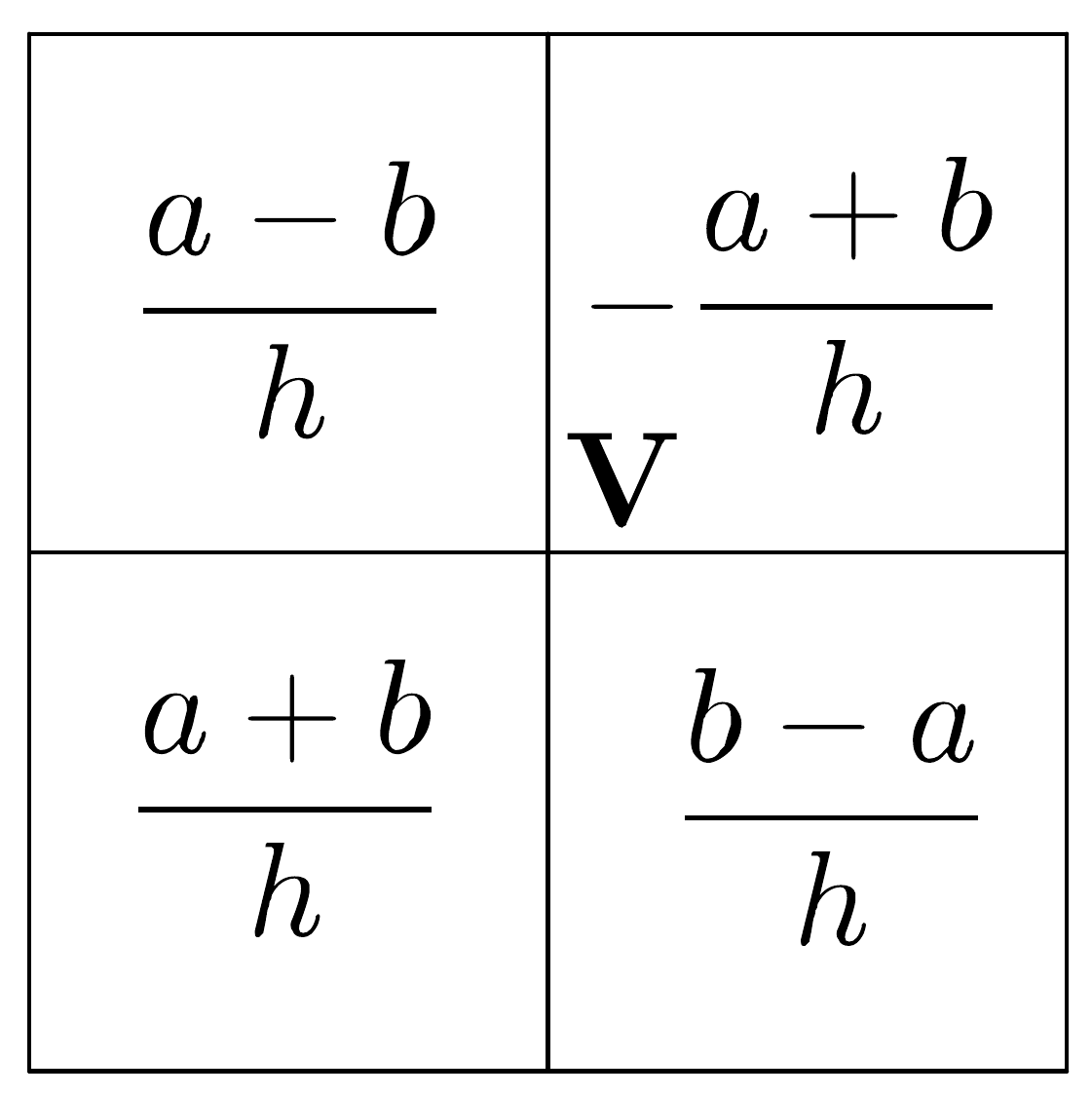}
}
\qquad
\subfigure[$\curlh\psivabxxx $ ]{
\includegraphics[width=0.18\textwidth]{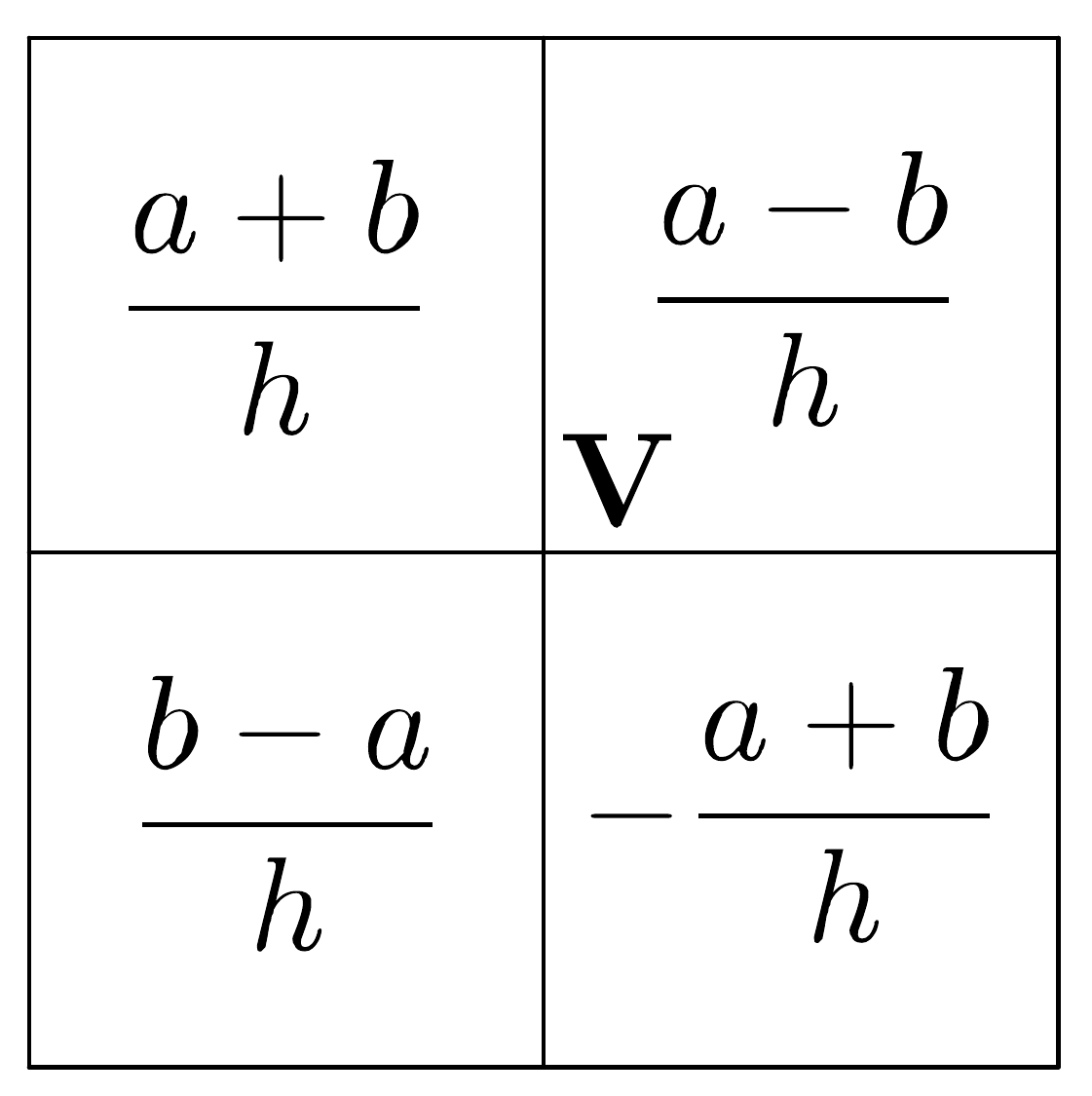}
}
\caption{ $\psi^{\V}[a,b]\in\NChzvec$ and its divergence and curl 
}\label{fig:psivab}
\end{figure} 

\subsection{Dimension of $V_h$}
We call a square $Q\in\Th$ an interior square 
if it has 4 interior vertices, otherwise, a boundary square. 
Let $\Ih$ be a set of all interior squares in $\Th$ and
denote by  $\#\Ih$ the number of its elements.
In the following lemma,  $\#\Ih$ relates with other numbers depending on $\Th$.
 
\begin{lemma}\label{lem:euler}
Let $\N(\V^i),\ \N(Q) $ be the numbers of all interior vertices and all squares in $\Th$, 
respectively.  Then,
\begin{equation}\label{eq:numIh}
 \#\Ih \beq 2\ \N(\V^i) - \N(Q) + 2.
\end{equation}
\end{lemma}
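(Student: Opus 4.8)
The plan is to derive the identity \eqref{eq:numIh} from Euler's formula for the planar graph induced by the triangulation $\Th$. The mesh of uniform squares, together with all its vertices and edges, forms a connected planar graph whose bounded faces are exactly the squares in $\Th$ (plus one unbounded outer face). First I would set up the Euler relation $V - E + F = 2$, where $V$ is the total number of vertices, $E$ the total number of edges, and $F$ the total number of faces including the unbounded one. Since every bounded face is a square, $F = \N(Q) + 1$, and substituting gives $V - E + \N(Q) = 1$, i.e. $V - E = 1 - \N(Q)$.

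Next I would relate the total vertex count $V$ and edge count $E$ to the quantities appearing in the statement by separating interior from boundary contributions. Write $V = \N(\V^i) + V_b$ and $E = E_i + E_b$, where $V_b$ is the number of boundary vertices and $E_b$ the number of boundary edges. On the boundary polygon $\p\O$ one has $V_b = E_b$, since the boundary is a simple closed polygonal curve, so these two terms cancel in $V - E$, leaving $\N(\V^i) - E_i = 1 - \N(Q)$, hence $E_i = \N(\V^i) + \N(Q) - 1$. The goal \eqref{eq:numIh} would then follow once $\#\Ih$ is expressed through $\N(\V^i)$ and $E_i$, so the crux is a second counting relation linking interior squares to interior vertices and edges.

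To obtain that second relation I would count incidences. The key observation is that a square is interior precisely when all four of its vertices are interior, and this can be detected by a local count at each square. I would carry out a double counting of the pairs (interior vertex, incident square) or, more efficiently, of (square, pair of its opposite interior features), exploiting the uniform square structure so that each interior vertex is shared by exactly four squares and each interior edge by exactly two. Assumption \ref{assumth} is what guarantees that no degenerate boundary configurations corrupt these incidence counts, so I would invoke it here to rule out squares with too many boundary vertices or interior edges touching two boundary vertices. Combining this incidence identity with $E_i = \N(\V^i) + \N(Q) - 1$ should yield $\#\Ih = 2\,\N(\V^i) - \N(Q) + 2$ after elimination.

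The main obstacle I anticipate is the second, incidence-based counting step: getting the boundary bookkeeping exactly right. The Euler-formula part is routine once the faces are identified, but relating $\#\Ih$ to $\N(\V^i)$ requires care about squares that are nearly interior but have one or two boundary vertices, and about corners of $\p\O$. This is exactly where Assumption \ref{assumth} must be used to exclude pathological meshes, and I would expect the bulk of the argument to consist in verifying that, under that assumption, the boundary squares contribute in a controlled way so that the interior count comes out cleanly. An alternative I would keep in reserve is to prove \eqref{eq:numIh} by an induction that builds $\Th$ one square at a time, tracking how $\#\Ih$, $\N(\V^i)$, and $\N(Q)$ each change when a square is appended, which sidesteps the global incidence count at the cost of a case analysis on how the new square attaches to the existing mesh.
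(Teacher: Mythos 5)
Your overall architecture is the same as the paper's: Euler's formula for the planar graph of $\Th$ on one side, and an incidence count relating $\#\Ih$ to the interior vertices on the other. The Euler half you carry out correctly, and it matches what the paper uses implicitly: with $F=\N(Q)+1$ and $\N(\V^b)=\N(E^b)$ you get $\N(E^i)=\N(\V^i)+\N(Q)-1$. The problem is that the second half --- which is where essentially all of the work and the entire use of Assumption \ref{assumth} reside --- is only promised, not delivered. You write that a double count of (interior vertex, incident square) pairs ``should yield'' the identity ``after elimination,'' and you yourself flag the boundary bookkeeping as the anticipated obstacle; but that bookkeeping \emph{is} the lemma. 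Concretely, your proposed count gives $4\,\N(\V^i)=\sum_{Q}n_i(Q)$, where $n_i(Q)$ is the number of interior vertices of $Q$, and combining this with the target \eqref{eq:numIh} reduces the claim to $\sum_{Q\notin\Ih}\bigl(2-n_i(Q)\bigr)=4$ --- in effect, that the number of convex ($90^\circ$) corners of $\p\O$ exceeds the number of reentrant ($270^\circ$) corners by $4$, together with a matching between corner types and boundary squares. You neither state nor prove this, and it is not routine: it needs Assumption \ref{assumth} to pin down how each boundary square's interior-vertex count is determined by the corners it touches.

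For comparison, the paper closes this gap with two explicit devices: (a) classifying vertices by the number $j$ of incident squares ($k_1,\dots,k_4$ with $k_4=\N(\V^i)$), which yields the pair of incidence identities $4\N(Q)=2\N(E^i)+\N(E^b)$ and $4\N(Q)=k_1+2k_2+3k_3+4k_4$, and (b) a ``painting'' argument assigning each boundary edge to its adjacent square, which gives $\N(Q)-\#\Ih=\N(E^b)-k_1+k_3$; adding these and invoking Euler's formula eliminates $k_1-k_3$ and produces \eqref{eq:numIh}. Step (b) is exactly the point where Assumption \ref{assumth} enters (squares at $90^\circ$ corners are painted twice, etc.), and it has no counterpart in your write-up. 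Your fallback suggestion of induction on squares could in principle work, but it is likewise only sketched and would require its own careful case analysis of how $\#\Ih$, $\N(\V^i)$, and $\N(Q)$ change at each attachment. As it stands, the proposal identifies the right strategy but leaves the decisive combinatorial identity unproved.
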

\begin{proof}
Let $\N(E^i)$  be the number of all interior edges in $\Th$.
Note the number of all boundary vertices, denoted by $\N(\V^b)$, is same as that of
all boundary edges,  denoted by $\N(E^b)$.

Every squares in $\Th$ has  4 edges. When we count them all 
to make $4\N(Q)$, 
each interior edges is done twice whereas each boundary edge is done once.
That is,
\begin{equation}\label{eq:Ihc1}
 4\N(Q) = 2\N(E^i)+\N(E^b).
\end{equation}

Let $k_j$ be the number of all vertices that meet only $j$ squares in $\Th$ 
for $j=1,2,3,4$. 
Then, $k_4=\N(\V^i)$ and $k_2$ is the number of boundary vertices that are not corners,
while
$k_1,k_3$ designate those of corners of respective inner angles 
$90^\circ, 270^\circ$. 

While we count every 4 vertices of a square in $\Th$ to make $4\N(Q)$, 
each vertex $\V$ is done repeatedly by its number of squares which meet $\V$.
Thus we have,
\begin{equation}\label{eq:Ihc2}
 4\N(Q)= k_1 + 2k_2+3k_3 + 4k_4. 
\end{equation}
From \eqref{eq:Ihc1} subtracted by \eqref{eq:Ihc2}, 
\begin{equation}\label{eq:Ihc3}
 2\N(E^i)-\N(\V^b)-4\N(\V^i)+k_1-k_3 =0,
\end{equation}
since $k_1+k_2+k_3 =\N(\V^b)=\N(E^b)$.

If we paint one boundary square for 
every one boundary edge, each boundary square that meets a corner of 
angle $90^\circ$ is done twice by Assumption \ref{assumth}, while 
each corner of angle  $270^\circ$ remains one boundary square unpainted. 
It means that 
\begin{equation}\label{eq:Ihc4}
 \N(Q)-\#\Ih=\N(E^b)-k_1+k_3. 
\end{equation}
From \eqref{eq:Ihc4} added by  \eqref{eq:Ihc3}, we obtain \eqref{eq:numIh}
through the following Euler formula for simply connected domain:
\begin{equation*}\label{eq:Euler}
\N(Q) - \N(E^i) + \N(\V^i)  = 1.
\end{equation*}
\end{proof}

The two-color theorem guarantees that the squares in $\Th$ can be colored
in two colors, if each interior vertex meets with even number of edges 
\cite[Ch. 15]{Stein}.
Thus, the squares in $\Th$ are grouped 
into $\Rh$ of the red squares and $\Kh$ of the black ones so that
squares sharing at least one edge have different colors,
as a checkerboard in Figure \ref{fig:redblack}.
\begin{figure}[ht]
\hspace{4.6cm}
\includegraphics[width=0.4\textwidth]{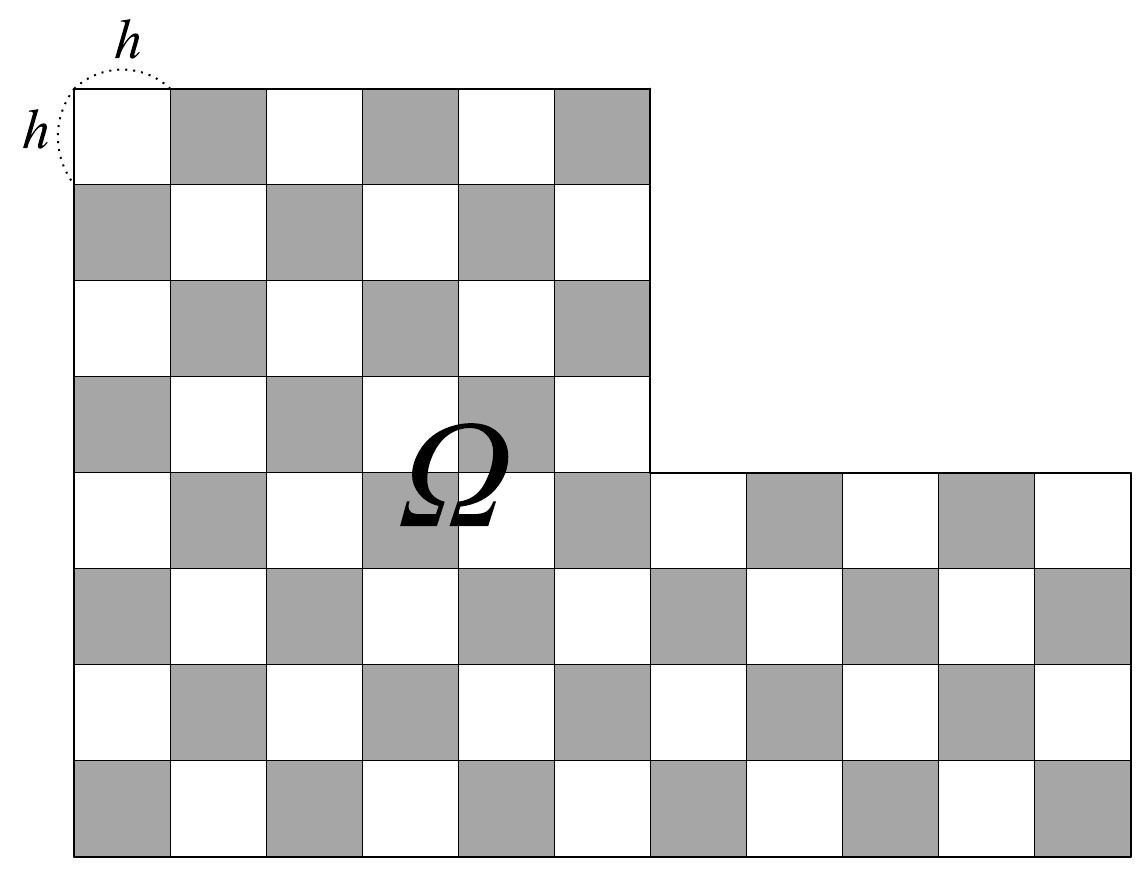}
\caption{ Red and black checkerboard in $\Th$
}\label{fig:redblack}
\end{figure} 

In the following lemma, if two squares have same color, they are connected by a path which
passes only squares of that color and does not meet any boundary vertices.
Let 
\[ \Ro=\Big(\disp\bigcup_{Q\in\Rh} \overline{Q}\Big)\setminus \p\O,\quad
   \Ko=\Big(\disp\bigcup_{Q\in\Kh} \overline{Q}\Big)\setminus \p\O.   \]
\begin{lemma}\label{lem:connecRB}
$\Ro$ is path-connected and so is $\Ko$.
\end{lemma}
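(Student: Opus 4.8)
The plan is to prove path-connectedness of $\Ro$ by showing that any two red squares can be joined by a chain of red squares in which consecutive squares share a \emph{vertex} (not an edge, since same-colored squares never share an edge), and moreover that this chain can be chosen to avoid boundary vertices of $\O$. Since two diagonally adjacent red squares meeting at a shared interior vertex $\V$ have overlapping closures containing $\V$, and $\V\notin\p\O$, such a chain traces a connected path lying inside $\Ro$. The symmetric argument applies to $\Ko$, so I would prove only the red case and remark that black is identical.

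First I would set up the combinatorial picture: under the checkerboard coloring, the red squares sit on one sublattice and any two red squares that are nearest neighbors are diagonal to each other, sharing exactly one vertex. So I would define an auxiliary graph $G$ whose nodes are the red squares in $\Rh$ and whose edges connect two red squares precisely when they share an \emph{interior} vertex of $\Th$. The goal reduces to showing $G$ is connected, because a path in $G$ lifts to a continuous path in $\Ro$: traveling from the center of one red square through the shared interior vertex to the center of the next stays in $\overline{Q_1}\cup\overline{Q_2}\subset\Ro$, the removal of $\p\O$ being harmless since the transit vertex is interior.

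Next I would argue connectivity of $G$ by induction on the number of squares, invoking the simple-connectedness of $\O$ and Assumption \ref{assumth}. The key geometric input is that simple-connectedness forbids ``pinch points'' where the red region could split; Assumption \ref{assumth} is exactly what rules out the pathological configurations (a ladder, or two rectangles glued along a single square or edge) in which two red squares might be forced to communicate only through a \emph{boundary} vertex. Concretely, if a red square $Q$ and an adjacent red square $Q'$ meet only at a boundary vertex $\V$, then the two squares filling the remaining quadrants at $\V$ are both boundary squares and the local configuration would violate the assumption (a square with two non-adjacent boundary vertices, or an interior edge meeting two boundary vertices). I would therefore show that every vertex shared by two red squares through which we might route the path can be taken interior, or that an alternative detour through other red squares exists.

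I expect the main obstacle to be the induction step proving $G$ is connected while controlling the avoidance of boundary vertices. The cleanest approach is probably to peel off a boundary square (or a ``corner'') and show its removal neither disconnects $\O$—so the inductive hypothesis applies to the smaller domain—nor isolates any red square from the rest; here one must verify that after removal the reduced triangulation still satisfies Assumption \ref{assumth} and remains simply connected, which requires a careful case analysis of how the boundary meets the removed piece. The delicate bookkeeping is ensuring that the path between two red squares never \emph{must} pass through a $270^\circ$ reflex corner in a way that forces it onto $\p\O$; Assumption \ref{assumth}, together with the two-color structure, should guarantee a local rerouting through an interior vertex exists in every such case.
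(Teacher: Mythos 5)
Your reduction to connectivity of the auxiliary graph $G$ (red squares joined when they share an interior vertex) is the right combinatorial target, but the proposal does not actually establish it: the induction-by-peeling argument is only announced, and you yourself flag that the case analysis needed to keep Assumption \ref{assumth} and simple-connectedness intact after removing a boundary square is the hard part. That is precisely where the content of the lemma lives, so as written there is a genuine gap. Moreover, the one concrete local claim you offer is incorrect: two red squares \emph{can} meet only at a boundary vertex without violating Assumption \ref{assumth}. If $\V$ is a $270^\circ$ reflex corner meeting exactly three squares, the two diagonal ones have the same color; this configuration is perfectly admissible (it occurs already in an L-shaped domain) and the assumption does not forbid it. What the assumption buys you is not the absence of such configurations but the existence of a detour: the black square between the two red ones has its other three vertices interior, so one can route around it through those vertices. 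Your hedge (``or that an alternative detour exists'') points at the right fix, but the detour is never constructed.

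The paper avoids the induction entirely. It takes an arbitrary continuous path in the open connected set $\O$ joining the two points and repairs it locally: each excursion of the path into the interior of a black square $Q_K$, entering on edge $E_1$ and leaving on edge $E_2$, is replaced by a polygonal route along $\p Q_K$ that stays in $\Ro$, with three cases ($E_1=E_2$; $E_1,E_2$ sharing a vertex, interior or boundary; $E_1,E_2$ parallel). Assumption \ref{assumth} is invoked only to guarantee that the corner vertices used for the reroute can be chosen interior. This sidesteps any global induction on the mesh and any need to verify that the assumption is preserved under modifications of $\Th$. If you want to salvage your approach, you would either need to carry out the peeling induction in full (including showing the reduced mesh still satisfies Assumption \ref{assumth}, which is not automatic), or replace it with a local rerouting argument of the paper's kind.
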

\begin{proof}
It is enough to prove for $\Ro$. 
If $\x,\y \in\Ro$,  there is a path $\cc$ in $\O$ joining $\x,\y$,
since the open set $\O$ is connected.
We can repair $\cc$ into a path in $\Ro$ in the following way. 

Let $\c_1, \c_2$ be two points in the path $\cc$ which are
on the boundary of a black square $Q_K$  and 
the part of $\cc$ between them, named $\cc_{12}$, belong to the interior of $Q_K$. 
Denote by $E_1, E_2$, the edges of $Q_K$ on which $\c_1, \c_2$ are, respectively.
The following 3 cases  are possible for $E_1, E_2$.
 
\vspace{1mm}
Case I. 
If $E_1 \beq E_2$, 
we can easily repair $\cc_{12}$ into the segment in $\Ro$ between $\c_1, \c_2$.

\vspace{1mm}
Case II. 
Let $E_1, E_2$ meet at a vertex $\V$ of $Q_K$. 
If $\V$ is an interior vertex,  
$\cc_{12}$ can be repaired into the 2 segments via $\V$ in $\Ro$ as in Figure \ref{fig:chgpath}-(a).
When $\V$ is a boundary vertex, by Assumption \ref{assumth}, 
the other three vertices of $Q_K$ 
are all interior vertices, since $\c_1,\c_2$ belong to $\O$.
Thus, 
$\cc_{12}$ can be done into the 4 segments in $\Ro$ which do not pass $\V$ 
as in Figure \ref{fig:chgpath}-(b).

\vspace{1mm}
Case III. If $E_1, E_2$ are parallel,
by Assumption \ref{assumth},
there are endpoints $\V_1, \V_2$ of $E_1, E_2$, respectively,
which are interior vertices 
such that the segment between them is an edge of $Q_K$. Thus,
$\cc_{12}$ can be done into  
the 3 segments via $\V_1, \V_2$ in $\Ro$ as in Figure \ref{fig:chgpath}-(c).

\begin{figure}[hb]
\hspace{10mm}
\subfigure[$\overline{E_1}\cap\overline{E_2}$ is a  vertex in $\O$   ]{
\includegraphics[width=0.25\textwidth]{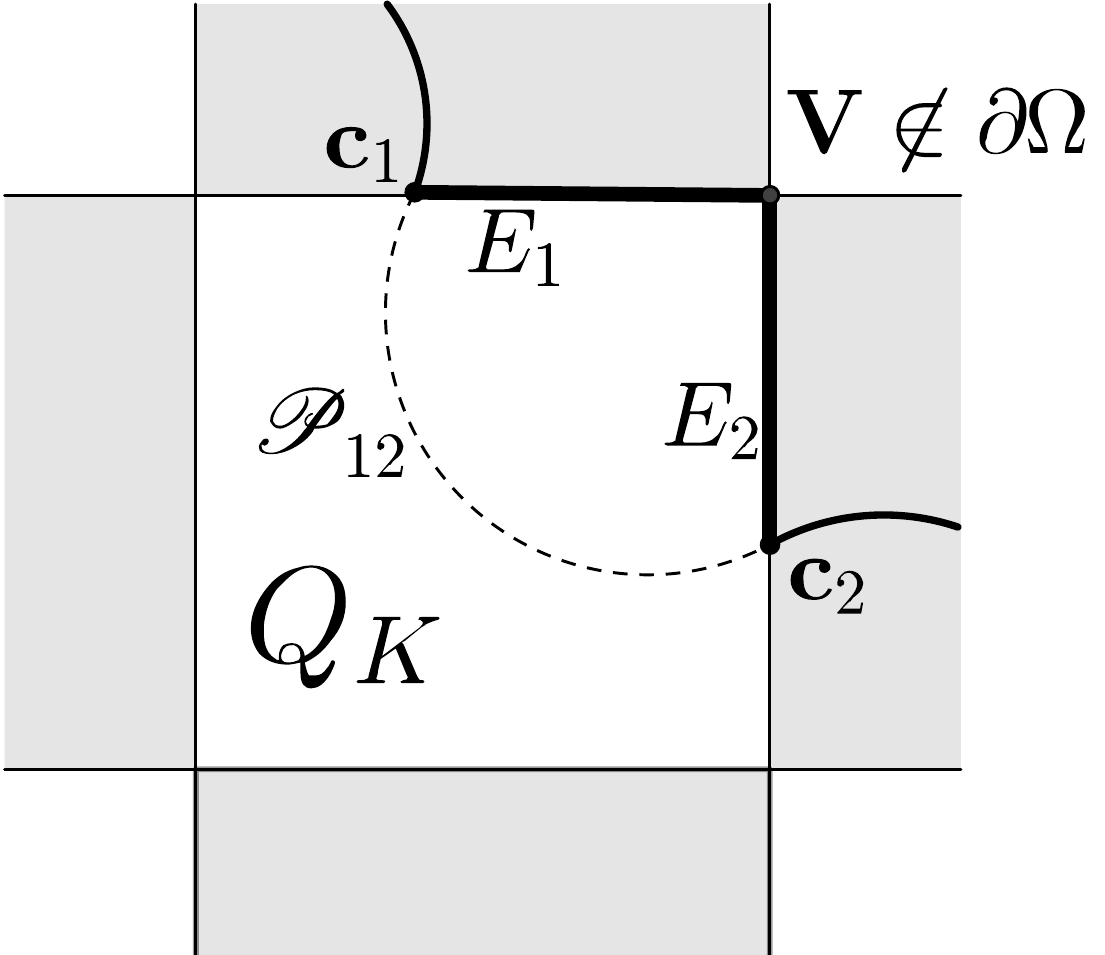}
}
\qquad
\subfigure[$\overline{E_1}\cap\overline{E_2}$ is a vertex on $\p\O$   ]{
\includegraphics[width=0.25\textwidth]{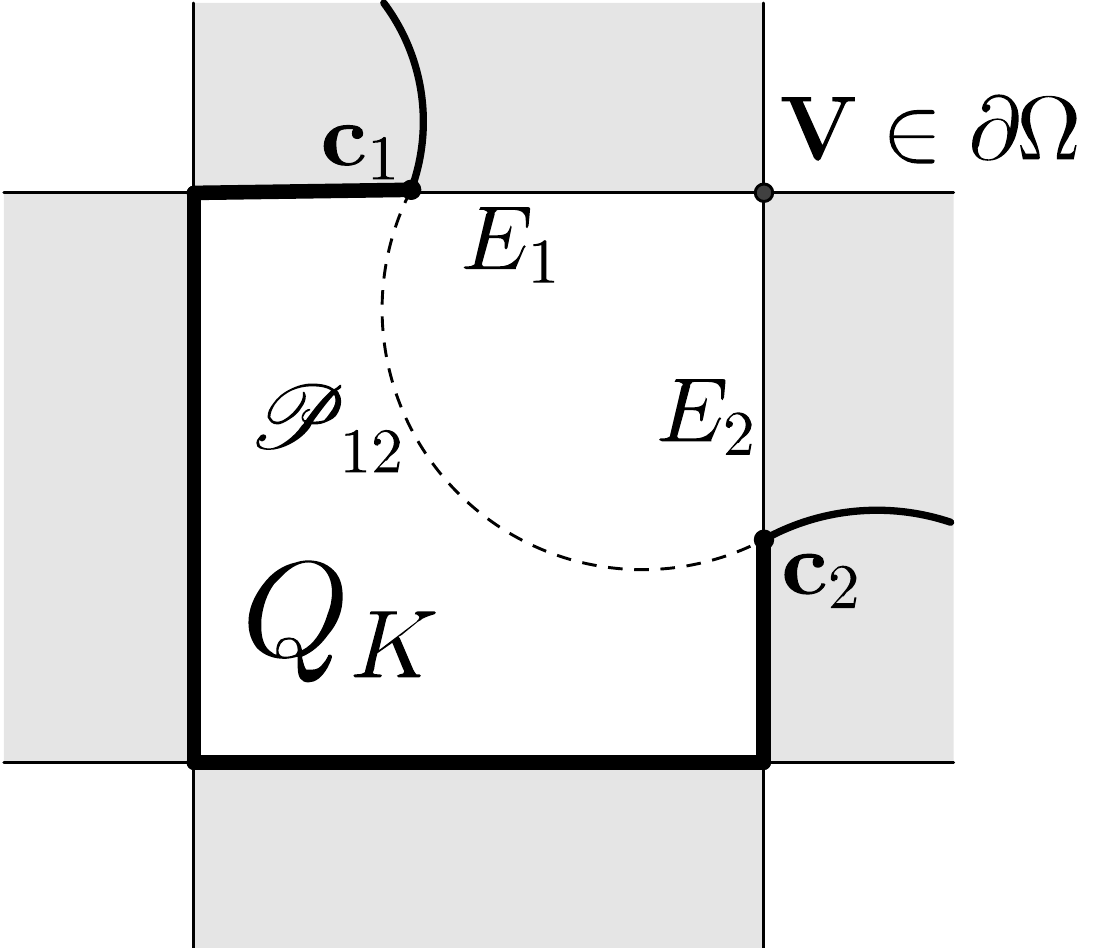}
}
\qquad
\subfigure[$\overline{E_1}\cap\overline{E_2} =\emptyset$   ]{
\includegraphics[width=0.23\textwidth]{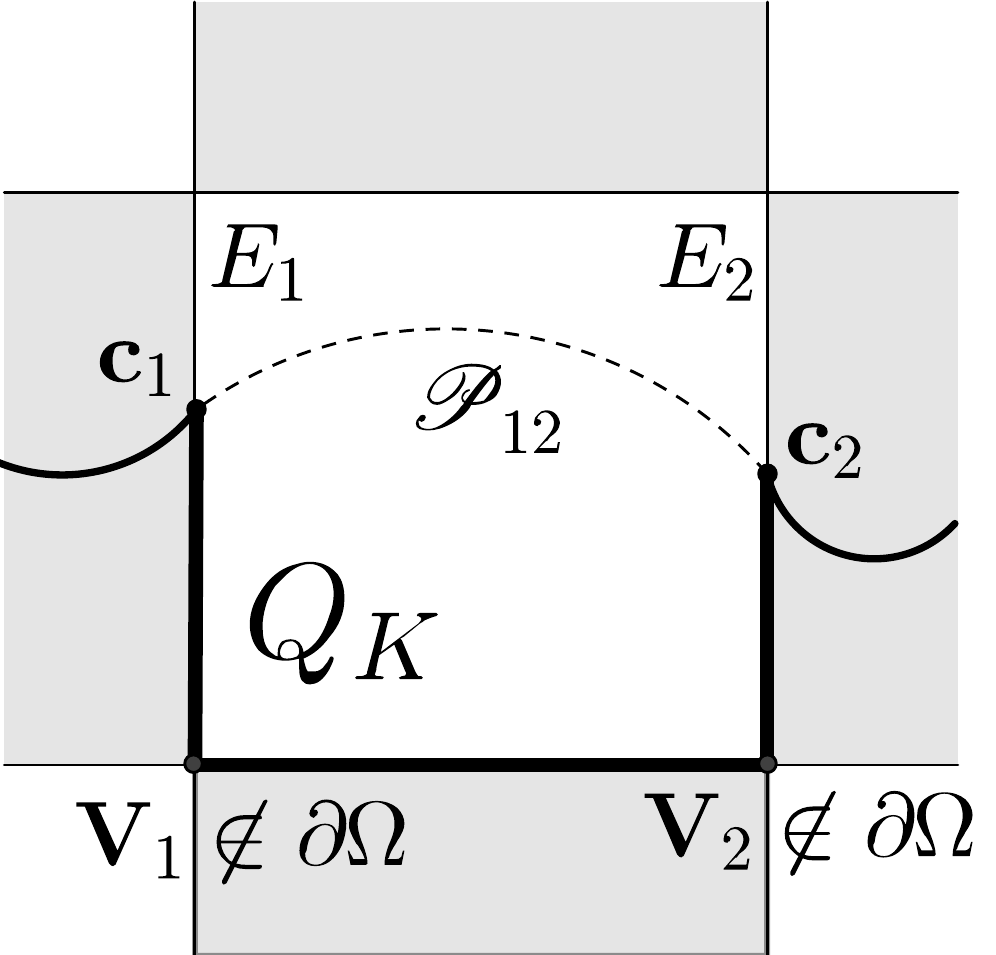}
}
\caption{The path $\cc_{12}$ in $Q_K$ (dashed line) is repaired into 
a path in  $\Ro$ (bold line)
}\label{fig:chgpath}
\end{figure} 
\end{proof}

Let $ \Mh$ be a subspace of $\mathcal{P}_{0,h}(\O)$ 
of piecewise constant functions such that
\begin{equation*}\label{def:phtil}
 \Mh \beq \{ q_h\in \mathcal{P}_{0,h}(\O)\ :\ \int_{\Ro} q_h\ d\sigma=
\int_{\Ko} q_h\ d\sigma =0 \}.
\end{equation*}
We note $V_h$ is the kernel of the operator 
$\divh: [\NChz]^2 \longrightarrow \mathcal{P}_{0,h}(\O)$.
The range of $\mathrm{div}_h$ is exactly $\Mh$ in the following lemma.
\begin{lemma}\label{lem:rangedivh}
%The range of  $\mathrm{div}_h$ is $\Mh$. That is,
\begin{equation*}\label{eq:divhncheql}
 \divh \big(  [\NChz]^2 \big) = \Mh.
\end{equation*}
\end{lemma}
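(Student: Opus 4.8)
The plan is to prove the two inclusions separately: $\divh([\NChz]^2)\subseteq\Mh$ by a direct computation on the basis, and the reverse inclusion by an $L^2$-duality argument whose geometric core is the connectivity of Lemma~\ref{lem:connecRB}.

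For the inclusion $\subseteq$, I would use that the functions $\psi^{\V}[1,0]$ and $\psi^{\V}[0,1]$, as $\V$ ranges over the interior vertices, span $[\NChz]^2$, so by linearity it suffices to verify $\int_{\Ro}\divh\psi^{\V}[a,b]\,d\sigma=\int_{\Ko}\divh\psi^{\V}[a,b]\,d\sigma=0$ for a single vertex. Since $\divh\psi^{\V}[a,b]$ is supported on the four squares $Q_1,\dots,Q_4$ meeting $\V$, and in the checkerboard coloring the diagonal pair $Q_1,Q_3$ carries one color while $Q_2,Q_4$ carries the other, formula \eqref{eq:divvab} gives $\divh\psi^{\V}[a,b](Q_1)+\divh\psi^{\V}[a,b](Q_3)=-(a+b)/h+(a+b)/h=0$ and likewise $\divh\psi^{\V}[a,b](Q_2)+\divh\psi^{\V}[a,b](Q_4)=(a-b)/h+(b-a)/h=0$. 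Multiplying by the cell area $h^2$ yields the two vanishing integrals, hence $\divh\vh\in\Mh$ for every $\vh$.

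For the reverse inclusion I would recast $\Mh$ via duality on $\Mhz$. Writing $\chi_{\Ro},\chi_{\Ko}\in\Mhz$ for the indicators of the red and black regions (with $\chi_{\Ro}+\chi_{\Ko}\equiv 1$), one has $\Mh=\{\chi_{\Ro},\chi_{\Ko}\}^{\perp}$; since the first step already gives $\divh([\NChz]^2)\subseteq\Mh$, it is then enough to prove the opposite containment of orthogonal complements, i.e.\ that any $q_h\in\Mhz$ with $(q_h,\divh\vh)_{L^2(\O)}=0$ for all $\vh\in[\NChz]^2$ lies in $\mathrm{Span}\{\chi_{\Ro},\chi_{\Ko}\}$. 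Testing this orthogonality against $\vh=\psi^{\V}[1,0]$ and $\vh=\psi^{\V}[0,1]$ and inserting \eqref{eq:divvab} produces, at each interior vertex $\V$, the two relations $-q_h(Q_1)+q_h(Q_2)+q_h(Q_3)-q_h(Q_4)=0$ and $-q_h(Q_1)-q_h(Q_2)+q_h(Q_3)+q_h(Q_4)=0$; adding and subtracting them forces $q_h(Q_1)=q_h(Q_3)$ and $q_h(Q_2)=q_h(Q_4)$, that is, $q_h$ takes equal values on any two same-color squares meeting at an interior vertex.

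The main obstacle, and the geometric heart of the argument, is to pass from this local constancy to global constancy on each color class, and this is exactly where Assumption~\ref{assumth} and Lemma~\ref{lem:connecRB} enter: any two squares of the same color are joined by a chain of same-color squares whose consecutive members share an interior vertex and which avoids $\p\O$. Propagating the equalities $q_h(Q_1)=q_h(Q_3)$, $q_h(Q_2)=q_h(Q_4)$ along such a chain forces $q_h$ to be constant on $\Ro$ and, separately, constant on $\Ko$, so $q_h\in\mathrm{Span}\{\chi_{\Ro},\chi_{\Ko}\}$. This establishes the needed containment of complements and hence $\divh([\NChz]^2)=\Mh$. As a consistency check, the conclusion forces $\dim\divh([\NChz]^2)=\N(Q)-2=\dim\Mh$, which through Lemma~\ref{lem:euler} is compatible with the expected identity $\dim V_h=\#\Ih$.
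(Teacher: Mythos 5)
Your proof is correct, and while the easy inclusion $\divh([\NChz]^2)\subset\Mh$ is handled exactly as in the paper (pairing the diagonal squares $Q_1,Q_3$ and $Q_2,Q_4$ in \eqref{eq:divvab}), your argument for the reverse inclusion is genuinely different in mechanism. The paper proceeds constructively: it fixes a reference red square $Q_R$, uses Lemma \ref{lem:connecRB} to join any other red square to $Q_R$ by a chain of red squares meeting at interior vertices, and telescopes the elementary fields $(h/2)\psi^{\V}[\pm1,\pm1]$ of \eqref{eq:examfqa} along the chain to exhibit $\N(Q)-2$ linearly independent functions in the range. You instead run the dual computation: any $q_h\in\Mhz$ annihilating the range must satisfy $q_h(Q_1)=q_h(Q_3)$ and $q_h(Q_2)=q_h(Q_4)$ at every interior vertex, and the same connectivity lemma then propagates these equalities to force $q_h$ into the two-dimensional span of the red and black indicator functions, whence $\Mh\subset\divh([\NChz]^2)$ by taking orthogonal complements in the finite-dimensional space $\Mhz$. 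Both routes rest on the identical geometric fact (same-color squares are chained through interior vertices, which requires Assumption \ref{assumth} via Lemma \ref{lem:connecRB}), so neither is more general; your annihilator argument is arguably slicker and avoids exhibiting preimages, but the paper's explicit telescoping construction is reused verbatim in subsection \ref{sec:press} to recover the pressure, so the constructive detour pays for itself later. One small point of care in your write-up: passing from path-connectedness of $\Ro$ to the existence of a chain of red squares whose consecutive members share an \emph{interior} vertex needs the short observation (made explicitly in the paper's proof) that two distinct same-color squares can only meet at a vertex and that a path avoiding $\p\O$ must cross at an interior one; you assert this as part of Lemma \ref{lem:connecRB}, which is acceptable but worth spelling out.
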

\begin{proof}
From \eqref{eq:spannchz}, $[\NChz]^2$ is spanned by $\psi^{\V}[1,0],\psi^{\V}[0,1]$ 
for all interior vertices $\V$ in $\Th$.
By \eqref{eq:divvab}, we have 
\[ \divh \psi^{\V}[1,0],\ \divh \psi^{\V}[0,1] \in \Mh.  \]
It means that
\begin{equation}\label{eq:divhnchsubset}
 \divh \big(  [\NChz]^2 \big) \subset \Mh.
\end{equation}

\vspace{2mm}
If two red squares $Q_a, Q_b$ meet at an interior vertex $\V$, by  \eqref{eq:divvab},
there is a function  $\tele\in[\NChz]^2$ which is
one of $ (h/2)\psi^{\V}[\pm 1,\pm 1]$ such that
\begin{equation}\label{eq:examfqa}
 \divh \tele (Q_a)=1,\quad  \divh \tele (Q_{b})=-1,\quad  
\divh \tele(Q) =0 \mbox{ for all other } Q\in\Th.
\end{equation}

Let's fix one red  square $Q_R$ in $\Rh$.
If $Q$ is a red square in $\Rh$ different to $Q_R$, by Lemma \ref{lem:connecRB}, 
there is a path $\cc$  in $\Ro$  joining two center points of $Q$ and $Q_R$.
Let $\cc$ pass through a sequence of  
$N$ red squares $\{Q_{i}\}_{i=1}^N$ in order such that
\[ Q_1=Q_{R},\quad Q_{N}= Q, \quad {Q}_i\neq {Q}_{i+1} 
\mbox{ for } i=1,2,\cdots,N-1. \]
For each $i=1,2,\cdots,N-1$,
 $\overline{Q}_i\cap \overline{Q}_{i+1}$ is an interior vertex,
 since $\cc$ should pass there and it dose not meet $\p\O$.
Thus, as in \eqref{eq:examfqa}, there is a function $\mathbf{f}_i\in[\NChz]^2$ such that
\begin{equation*}
 \divh \tele_i (Q_i)=1,\quad  \divh \tele_i (Q_{i+1})=-1,\quad  
\divh \tele_i(Q) =0 \mbox{ if } Q\neq Q_i, Q_{i+1}.
\end{equation*}
Then, setting $\bw_h=\sum_{i=1}^{N-1} \tele_i \in\NChzvec$, we have 
\begin{equation}\label{eq:divfqr}
 \divh \bw_h (Q_1)=1,\quad  \divh \bw_h (Q_{N})=-1,\quad  
\divh \bw_h(Q) =0 \mbox{ if } Q\neq Q_1, Q_{N}. 
\end{equation}

Since these arguments can be repeated for the black squares in $\Kh$, 
\eqref{eq:divfqr} means  
the range of $\divh$ 
has at least $\N(Q)-2$ linear independent 
piecewise constant functions. 
It is combined with \eqref{eq:divhnchsubset} to complete the proof.
\end{proof}

Now, we reach at the theorem for the dimension of $V_h$.
\begin{theorem}\label{thm:dimVh}
The dimension of $V_h$ is the number of interior squares in $\Th$.
\end{theorem}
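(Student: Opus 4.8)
The statement to prove is $\dim V_h = \#\Ih$. The plan is to recognize $V_h$ as a kernel and apply the rank--nullity theorem, reducing everything to dimension counts that have already been set up. Indeed, by \eqref{eq:divh0} the space $V_h$ is precisely the kernel of the linear map $\divh : [\NChz]^2 \to \mathcal{P}_{0,h}(\O)$, so
\begin{equation*}
\dim V_h \beq \dim [\NChz]^2 - \dim\big(\divh\,[\NChz]^2\big).
\end{equation*}
Thus the whole problem collapses to computing the two dimensions on the right-hand side and matching the result with the count supplied by Lemma \ref{lem:euler}.

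For the first term, the basis \eqref{eq:spannchz} gives $\dim \NChz = \N(\V^i)$, the number of interior vertices, so componentwise $\dim [\NChz]^2 = 2\,\N(\V^i)$. For the second term I would invoke Lemma \ref{lem:rangedivh}, which identifies the range as $\divh\,[\NChz]^2 = \Mh$. Since $\mathcal{P}_{0,h}(\O)$ has dimension $\N(Q)$, one degree of freedom per square, and $\Mh$ is carved out of it by the two linear functionals $q_h \mapsto \int_{\Ro} q_h\,d\sigma$ and $q_h \mapsto \int_{\Ko} q_h\,d\sigma$, one expects $\dim \Mh = \N(Q) - 2$.

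The one point that needs care --- the main, if mild, obstacle --- is verifying that these two functionals are linearly independent, so that the codimension is exactly $2$ rather than $1$. This is immediate from the checkerboard structure: the first functional depends only on the values of $q_h$ on the red squares and the second only on those on the black squares, and both color classes $\Rh, \Kh$ are nonempty. Hence neither functional is a multiple of the other, and jointly they map $\mathcal{P}_{0,h}(\O)$ onto $\R^2$, giving $\dim \Mh = \N(Q) - 2$. Assembling the pieces,
\begin{equation*}
\dim V_h \beq 2\,\N(\V^i) - \big(\N(Q) - 2\big) \beq 2\,\N(\V^i) - \N(Q) + 2,
\end{equation*}
and by Lemma \ref{lem:euler} the right-hand side equals $\#\Ih$, the number of interior squares, which completes the argument.
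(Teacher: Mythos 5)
Your proposal is correct and follows essentially the same route as the paper: identify $V_h$ as the kernel of $\divh$, apply rank--nullity with $\dim[\NChz]^2 = 2\N(\V^i)$ from \eqref{eq:spannchz} and $\dim\big(\divh[\NChz]^2\big) = \dim\Mh = \N(Q)-2$ from Lemma \ref{lem:rangedivh}, then invoke Lemma \ref{lem:euler}. The only addition is your explicit check that the two mean-value functionals defining $\Mh$ are independent, a detail the paper leaves implicit inside Lemma \ref{lem:rangedivh}.
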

\begin{proof}
Since $V_h$ is the kernel of $\divh$, by \eqref{eq:spannchz} 
and Lemma \ref{lem:euler} and \ref{lem:rangedivh}, we have
\begin{eqnarray*}
 \dim(V_h) &=&\dim(\Xh)- \dim\Big(\divh \big(  [\NChz]^2 \big)\Big) \\
      &=& 2\N(\V^i) - (\N(Q)-2) = \#\Ih.
\end{eqnarray*}     
\end{proof}

\subsection{ Basis for $V_h$}
For each square $Q$ in $\Th$, denote by $\V^{rt}(Q), \V^{lt}(Q),\V^{lb}(Q), \V^{rb}(Q)$, 
respective vertices
of $Q$ in the right top, left top, left bottom, right bottom corners of $Q$
as depicted in Figure \ref{fig:nameconv}.
Let $Q^{rt}, Q^{lt}, Q^{lb}, Q^{rb}$ be squares in $\Th$ whose closures intersect with $\overline{Q}$
at only $\V^{rt}(Q)$, $\V^{lt}(Q)$, $\V^{lb}(Q)$, $\V^{rb}(Q)$, respectively.
If $Q\in\Th$ is a boundary square, some of them are empty.  
\begin{figure}[ht]
\hspace{48mm}
\includegraphics[width=0.35\textwidth]{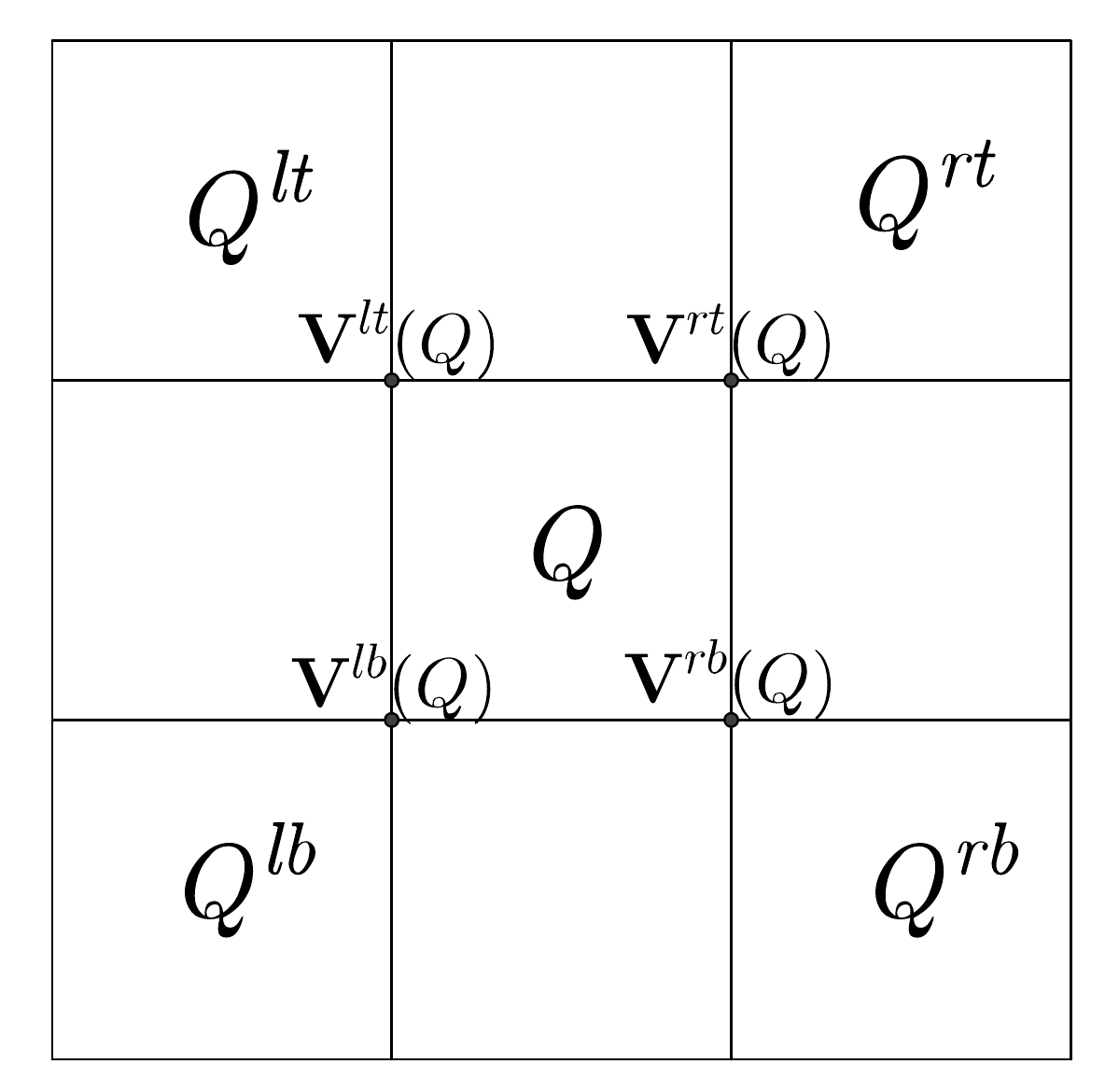}
\caption{ Name convention 
}\label{fig:nameconv}
\end{figure} 

\vspace{1mm}
For each interior square $Q\in\Ih$,
define a locally divergence-free function $\Psih^Q\in V_h$ by
\begin{equation*}
\Psih^Q =\psi^{\V^{rt}(Q)} [\frac12, -\frac12] + \psi^{\V^{lt}(Q)} [\frac12, \frac12] 
+ \psi^{\V^{lb}(Q)} [-\frac12, \frac12] +\psi^{\V^{rb}(Q)} [-\frac12, -\frac12].
\end{equation*}
The nontrivial values of $\Psih^Q$ at the 
midpoints of edges in $\Th$ are depicted
in Figure \ref{fig:div0}-(a).
With \eqref{eq:divvab}, \eqref{eq:curlvab}, 
we can easily check that $\divh \Psih^Q$ vanishes in all squares and
\begin{equation}\label{eq:curlpQ}
\curlh \Psih^Q = \left\{ \begin{array}{cl}
-4/h   & \mbox{ in } Q, \\
1/h    & \mbox{ in } Q^{rt}, Q^{lt}, Q^{lb}, Q^{rb},\\
0      & \mbox{ in other squares,}
\end{array}\right.
\end{equation}
as in  Figure \ref{fig:div0}-(b).
\begin{figure}[ht]
\subfigure[Nontrivial values of  $\Psih^Q$ at midpoints ]{
\includegraphics[width=0.45\textwidth]{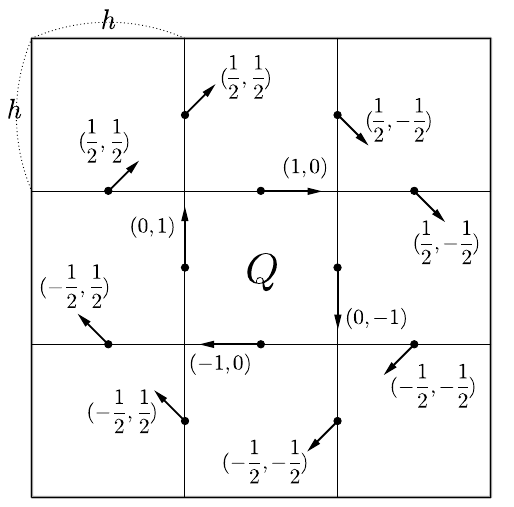}
}
\qquad
\subfigure[Nontrivial values of $\curlh\Psih^Q$ ]{
\includegraphics[width=0.45\textwidth]{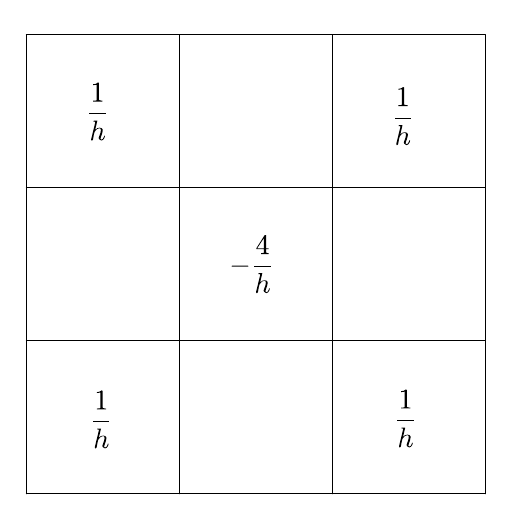}
}
\caption{ Divergence-free $\Psih^Q\in V_h$ centered at an interior square $Q$
}\label{fig:div0}
\end{figure}

Then, we are able to specify a basis for $V_h$ in the following theorem.
\begin{theorem} \label{lem:dimwh}
The set $\B = \{\Psih^{Q}\ :\  Q \mbox{ is an interior square in } \Th \}$ 
is a basis for $V_h$,
\end{theorem}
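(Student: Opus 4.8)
The plan is to reduce the statement to linear independence. By construction each $\Psih^Q$ lies in $[\NChz]^2$ and is locally divergence-free, so $\B\subset V_h$; moreover $\B$ contains exactly one function per interior square, hence $\#\B=\#\Ih$. Since Theorem \ref{thm:dimVh} already gives $\dim V_h=\#\Ih$, it suffices to prove that $\B$ is linearly independent, after which $\B$ is automatically a spanning set and therefore a basis.

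To prove independence, I would suppose $\sum_{Q\in\Ih} c_Q\Psih^Q=0$, apply the linear operator $\curlh$, and invoke the explicit values \eqref{eq:curlpQ}, so that the piecewise constant function $\sum_{Q} c_Q\,\curlh\Psih^Q$ vanishes on every square of $\Th$. The coefficients can then be extracted one at a time by a geometric extremal argument. Assume for contradiction that some $c_Q\neq0$, and let $Q^\ast$ realize the maximum of the center $y$-coordinate among all $Q$ with $c_Q\neq0$, breaking ties by the maximal center $x$-coordinate. Because $Q^\ast$ is an interior square, its top-right vertex $\V^{rt}(Q^\ast)$ is an interior vertex, so the diagonal neighbor $R:=(Q^\ast)^{rt}$ is a genuine square of $\Th$ on which the vanishing identity may be evaluated.

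The crux is to read off which basis functions contribute to the value at the fixed square $R$. By \eqref{eq:curlpQ}, $\curlh\Psih^Q(R)\neq0$ only when $R\in\{Q,\Qrt,\Qlt,\Qlb,\Qrb\}$, and since the diagonal-neighbor relation is symmetric this occurs precisely for $Q\in\{R,R^{rt},R^{lt},R^{lb},R^{rb}\}$. Among these five squares, $Q^\ast=R^{lb}$ is the bottom-left diagonal neighbor of $R$, while $R$ and the three others each have strictly larger center $y$-coordinate, or equal $y$ and strictly larger center $x$; by the extremal choice of $Q^\ast$ all of their coefficients vanish. Hence evaluating $\sum_{Q} c_Q\,\curlh\Psih^Q=0$ at $R$ leaves only $c_{Q^\ast}\,\curlh\Psih^{Q^\ast}(R)=c_{Q^\ast}/h=0$, forcing $c_{Q^\ast}=0$, a contradiction. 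Thus every $c_Q$ vanishes, $\B$ is independent, and together with the dimension count it is a basis. The argument is routine once set up; the only points that demand care are verifying that the probe square $R=(Q^\ast)^{rt}$ genuinely exists (which is exactly where the interiority of $Q^\ast$ is used) and correctly converting the support pattern of $\curlh\Psih^Q$ into the symmetric description of the contributing $Q$ for a fixed evaluation square, so that extremality annihilates every term except the one at $R^{lb}=Q^\ast$.
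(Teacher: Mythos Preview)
Your proof is correct and takes a genuinely different route from the paper's. Both arguments begin identically: by Theorem~\ref{thm:dimVh} it suffices to show $\B$ is linearly independent, and applying $\curlh$ reduces this to linear independence of $\{\curlh\Psih^Q : Q\in\Ih\}$ inside $\Mhz$. From there the paper exploits the red/black checkerboard structure: since the support of $\curlh\Psih^Q$ lies entirely among squares of the same color as $Q$, the problem decouples into a red and a black part. For each color the paper assembles the square matrix $M_R$ whose columns are the scaled curl patterns $\f_{Q_j}$ on all squares of that color, observes that it is irreducibly diagonally dominant (irreducibility from Lemma~\ref{lem:connecRB}, strict dominance in the columns indexed by boundary squares), and invokes Taussky's theorem to conclude invertibility. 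Your extremal argument bypasses all of this machinery: picking the lexicographically extreme $Q^\ast$ and evaluating at the square $R=(Q^\ast)^{rt}$ isolates a single nonzero term, with no appeal to the two-coloring, to Lemma~\ref{lem:connecRB}, or to Taussky. Your route is shorter and entirely elementary; the paper's route, though heavier, foreshadows the red/black splitting that becomes computationally central in Section~\ref{sec:Stokes} (Lemma~\ref{lem:qrqksplit} and the decomposition $\B=\B_R\cup\B_K$).
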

\begin{proof}
By  Theorem \ref{thm:dimVh}, the linear independency of $\B$ completes the proof.
If a linear combination of functions in $\B$ vanish,
does its discrete curl, too. Thus,  it is sufficient to prove that
the following set $\S_h$ is linearly independent, 
\[ \S_h=\{\curlh \Psih^Q \  :\  Q \mbox{ is an interior square in } \Th\}. \]

For each square $Q\in\Th$,
we define a piecewise constant function $\f_Q\in \Mhz$ by
\begin{equation}\label{eq:deffq}
\f_Q = \left\{ \begin{array}{cl}
-4   & \mbox{ in } Q, \\
1    & \mbox{ in } Q^{rt}, Q^{lt}, Q^{lb}, Q^{rb},\\
0      & \mbox{ in other squares}.
\end{array}\right.
\end{equation}
Let $N$ be the number of all red squares in $\Rh$ which are numbered as
$Q_1, Q_2, \cdots, Q_{N}$. Regarding $\f_{Q_j}$  as a column vector 
in $\R^{N}$ for each $j=1,2,\cdots,N$, we have an $N\times N$ matrix $M_R$ 
such that
\[ M_R=[\f_{Q_1}, \f_{Q_2}, \cdots, \f_{Q_{N}} ]. \]

By the definition $\f_Q$ in \eqref{eq:deffq}, all columns of $M_R$ are diagonally dominant 
and,  if $Q_j$ is a boundary square, the $j$-th column is strictly diagonally dominant.
Furthermore, by Lemma \ref{lem:connecRB}, $M_R$ is irreducible,
since $M_R(i,j)$ is nonzero 
whenever $\overline{Q_i}, \overline{Q_j}$ intersect 
for $i,j=1,2,\cdots,N$.  Thus by Taussky theorem,
$M_R$ is invertible \cite{Horn}. Then,
since $\f_Q$ is $\curlh\Psih^Q$ in \eqref{eq:curlpQ} multiplied by $h$,
the following set is linearly independent, 
\[ \S_h^R=\{\curlh \Psih^Q \  :\  Q \mbox{ is an interior red square in } \Th\}. \]

Repeating same arguments for the black squares in $\Kh$, we have 
the following $\S_h^K$ is also linearly independent,
\[ \S_h^K=\{\curlh \Psih^Q \  :\  Q \mbox{ is an interior black square in } \Th\}.\]
For a red square $Q$ and a black one $Q'$,
the supports of $\curlh \Psih^Q$ and  $\curlh \Psih^{Q'}$ do not intersect.
Therefore, we conclude that $\S_h= \S_h^R\cup  \S_h^K$ is linearly independent.
\end{proof}

The basis  for $V_h$ in Theorem  \ref{lem:dimwh}
plays
an important role in error analysis of the conforming $\Qhz-\mathcal{P}_{0,h}(\O)$
 with the dimension of $V_h$ in Theorem \ref{thm:dimVh}  \cite{Park2019}.

\section{Application for incompressible Stokes problems}\label{sec:Stokes}
Let $(\u, p)\in [H_0^1(\O)]^2\times L_0^2(\O)$ 
be the solution of the variational form
of an incompressible Stokes equation:
\begin{equation*}\label{eq:Stokes}
(\nabla \u, \nabla \v) -(p,\div \v) +(q, \div \u) = (\f, \v),
\quad\forall (\v, q)\in  [H_0^1(\O)]^2\times L_0^2(\O),
\end{equation*} 
for a source function $\f\in [L^2(\O)]^2$.

For the finite element solution, let $(\u_h, p_h)\in \Xh\times \Mh$ satisfies that
\begin{equation}\label{eq:dStokes}
(\nabla \u_h, \nabla \v_h) -(p_h,\div \v_h) +(q_h, \div \u_h) = (\f, \v_h),
\quad\forall (\v_h, q_h)\in  \Xh\times\Mh.
\end{equation}
If $\u\in [H^2(\O)]^2, p\in H^1(\O)$, the following error estimate holds
\begin{equation}\label{eq:errorbound}
|\u-\u_h|_{1,h} + \|p-p_h\|_0 \le C_{\O}h (|\u|_2 + |p|_1), 
\end{equation}
since $\Xh\times \Mh$ satisfies the inf-sup condition \cite{kim2016}.

In an alternate way to get  $(\u_h, p_h)\in \Xh\times \Mh$ in \eqref{eq:dStokes},
we can solve an elliptic problem for velocity and 
apply an explicit method for pressure as in next two subsections.

\subsection{Elliptic problem for velocity}\label{sec:velo}
The discrete velocity $\u_h$ in \eqref{eq:dStokes} satisfies that
\[ (q_h,\div \u_h)=0,\quad \mbox{for all } q_h\in\Mh. \]
Since  there exists $q_h\in\Mh$ such that $q_h=\div \u_h$ from \eqref{eq:divvab},
we have
\[ \u_h \in V_h,\]
for the locally divergence-free finite element space $V_h$ in \eqref{eq:divh0}.
Thus, to get $\u_h$ in \eqref{eq:dStokes}, we can solve the following elliptic problem
for velocity:
\begin{equation}\label{eq:femellip}
(\nabla \u_h, \nabla \v_h) = (\f, \v_h), \quad\forall \v_h\in V_h,
\end{equation}
which smaller than the problem \eqref{eq:dStokes}.

We note, since the norm is decomposed in Lemma \ref{lem:normdecom}, 
so is the inner product as
\begin{equation*}\label{eq:innprod}
 (\nabla \v_h, \nabla\bw_h)= (\divh \v_h, \divh\bw_h)    + (\curlh \v_h, \curlh\bw_h), 
\quad \forall \v_h, \bw_h\in \NChzvec.
\end{equation*}
The above means 
\begin{equation*}\label{eq:nablacurl}
 (\nabla \v_h, \nabla\bw_h)=(\curlh \v_h, \curlh\bw_h), 
\quad \forall \v_h, \bw_h\in V_h.
\end{equation*}
Thus, the problem
 \eqref{eq:femellip} is equivalent to
\begin{equation}\label{eq:femellip2}
(\curlh \u_h, \curlh \v_h) = (\f, \v_h), \quad\forall \v_h\in V_h.
\end{equation}

Suggested in Theorem \ref{lem:dimwh}, a basis $\B$ 
for $V_h$ consists of  $\Psih^{Q}$ for 
all interior square $Q\in\Th$. 
For an interior square $Q$, although the support of 
the basis function $\Psih^{Q}$ in $V_h$ consists of 9 squares,
that of  $\curlh \Psih^{Q}$ is 5 squares as in Figure \ref{fig:div0}.
If we assume $h=1$ for simplicity, as in 
Figure \ref{fig:innQQ1}, \ref{fig:innQQ3}, \ref{fig:innQQ4}, \ref{fig:innQQ5},
we have
\begin{equation*}
(\nabla \Psih^{Q} , \nabla \Psih^{Q'})=
\left\{\begin{array}{cl}
20 & \mbox{ if } Q =Q',\\
2  & \mbox{ if supports of } \nabla \Psih^{Q}, \nabla \Psih^{Q'} \mbox{ meet at 3 squares}, \\
-8  & \mbox{ if supports of } \nabla \Psih^{Q}, \nabla \Psih^{Q'} \mbox{ meet at 4 squares}, \\
1  & \mbox{ if supports of } \nabla \Psih^{Q}, \nabla \Psih^{Q'} \mbox{ meet at 1 square}.
\end{array}\right.
\end{equation*}
\begin{figure}[ht]
  \hspace{4.3cm}
\includegraphics[width=6.8cm]{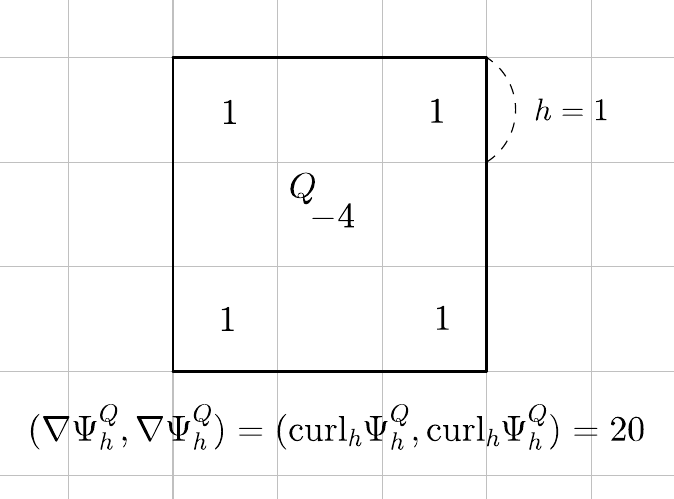}
\caption{Inner product $(\nabla\Psih^{Q},\nabla\Psih^{Q})$ when $h=1$}
\label{fig:innQQ1}
\end{figure} 
\begin{figure}[ht]
 \hspace{35.5mm}
\includegraphics[width=8cm]{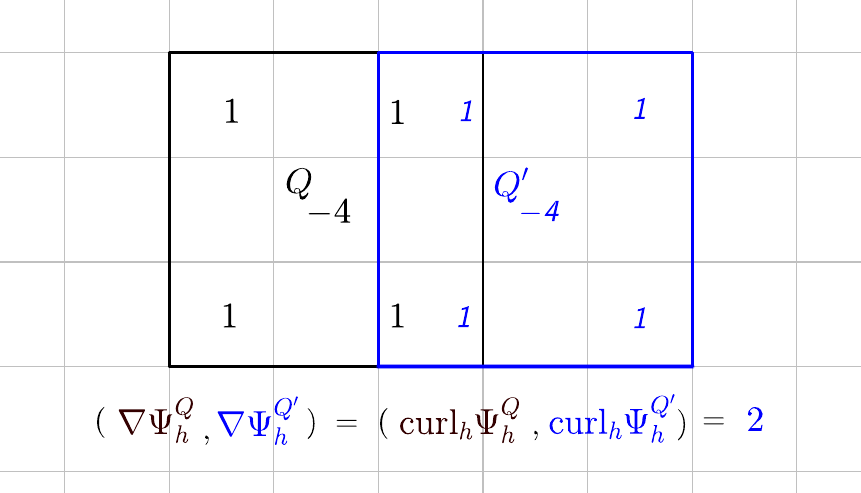}
\caption{Inner product of  $\nabla\Psih^{Q},{\color{blue}\nabla\Psih^{Q'}}$ 
when their supports meet at 3 squares}\label{fig:innQQ3}
\end{figure} 
\begin{figure}[ht]
 \hspace{35.5mm}
\includegraphics[width=8cm]{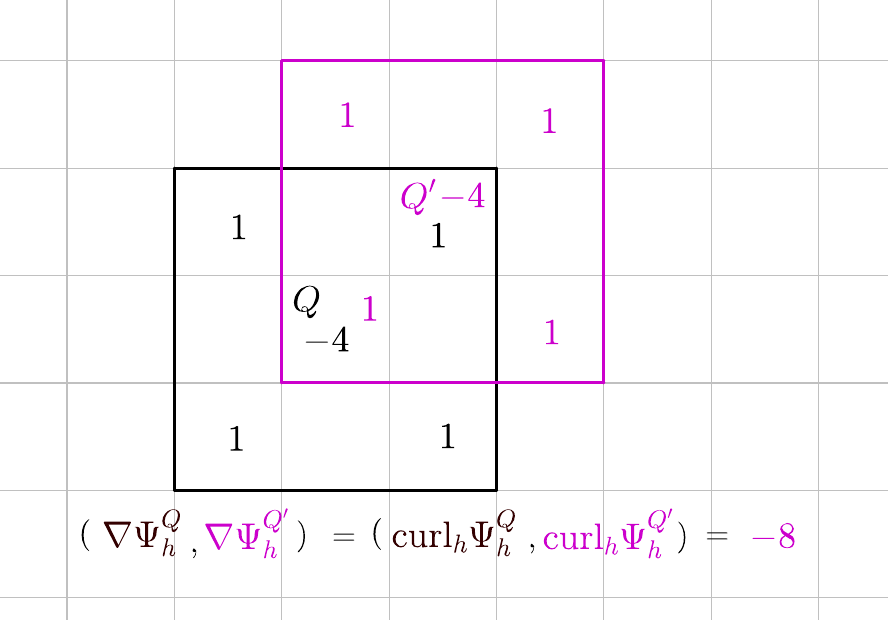}
\caption{Inner product of  $\nabla\Psih^{Q},{\color{Purple}\nabla\Psih^{Q'}}$ 
when their supports meet at 4 squares}\label{fig:innQQ4}
\end{figure} 
\definecolor{myGreen}{RGB}{0,153,102}
\begin{figure}[ht]
 \hspace{26mm}
\includegraphics[width=10.2cm]{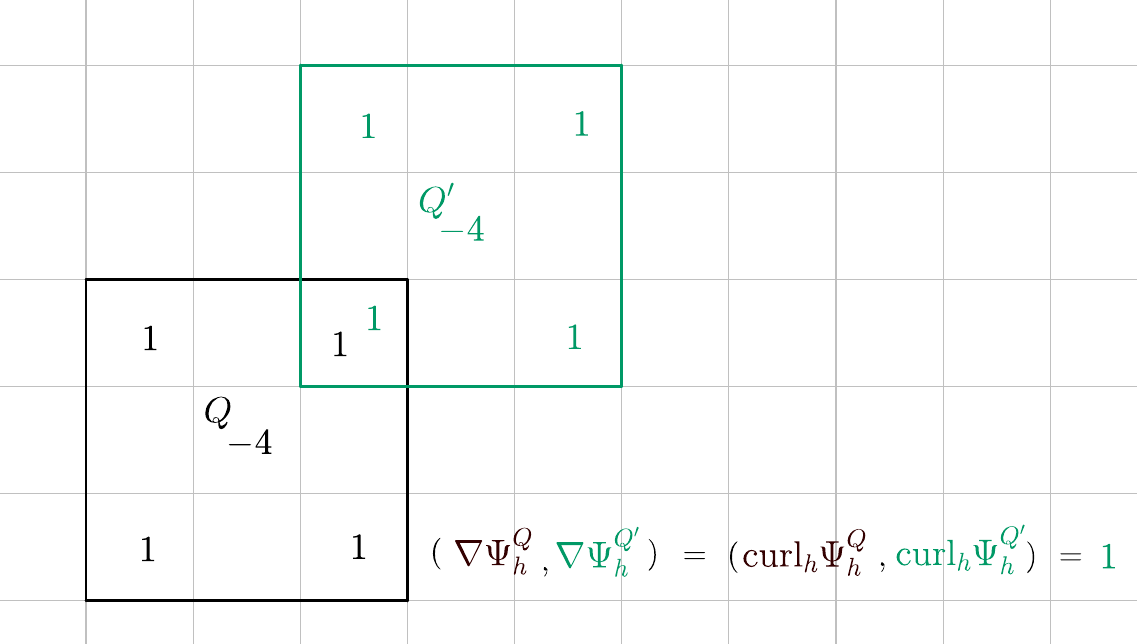}
\caption{Inner product of  $\nabla\Psih^{Q},{\color{myGreen}\nabla\Psih^{Q'}}$ 
when their supports meet at 1 square}\label{fig:innQQ5}
\end{figure} 
Besides, if $Q$ is a red square, 
the support of  $\curlh \Psih^{Q}$ lies in 5 red squares,
and vice versa for a black square. Thus, their supports do not meet each other
as in Figure \ref{fig:RKdonot}.
It means the following lemma. 
\begin{lemma}\label{lem:qrqksplit}
 For each interior red square $Q_R$ and black square $Q_K$,
$$(\nabla \Psih^{Q_R} , \nabla \Psih^{Q_K} )=0.$$
\end{lemma}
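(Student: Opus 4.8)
The plan is to reduce the gradient inner product to a curl inner product and then exploit the disjointness of the supports. First I would invoke the inner-product decomposition that follows from Lemma \ref{lem:normdecom}, namely
$$
(\nabla \Psih^{Q_R}, \nabla \Psih^{Q_K}) = (\divh \Psih^{Q_R}, \divh \Psih^{Q_K}) + (\curlh \Psih^{Q_R}, \curlh \Psih^{Q_K}).
$$
Since $\Psih^{Q_R}, \Psih^{Q_K} \in V_h$ are locally divergence-free, the first term on the right vanishes identically, so the claim reduces to showing $(\curlh \Psih^{Q_R}, \curlh \Psih^{Q_K}) = 0$.

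Next I would identify the supports using \eqref{eq:curlpQ}. For an interior square $Q$, the function $\curlh \Psih^Q$ is nonzero only on the five squares $Q, Q^{rt}, Q^{lt}, Q^{lb}, Q^{rb}$. The four outer squares meet $Q$ at a single corner vertex only, hence share no edge with $Q$; in the two-coloring of Figure \ref{fig:redblack}, squares that meet merely at a corner carry the same color as $Q$. Consequently, when $Q=Q_R$ is red, all five squares in the support of $\curlh \Psih^{Q_R}$ lie in $\Rh$, and when $Q=Q_K$ is black, all five squares in the support of $\curlh \Psih^{Q_K}$ lie in $\Kh$.

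Finally, since $\Rh$ and $\Kh$ are disjoint, the supports of $\curlh \Psih^{Q_R}$ and $\curlh \Psih^{Q_K}$ do not overlap on any square. As the $L^2$ inner product of two piecewise constant functions with disjoint supports is zero, I obtain $(\curlh \Psih^{Q_R}, \curlh \Psih^{Q_K})=0$, which proves the lemma. The only point requiring care, and hence the main (though minor) obstacle, is the coloring observation that the diagonal neighbors $Q^{rt}, Q^{lt}, Q^{lb}, Q^{rb}$ inherit the color of $Q$; this is precisely the checkerboard property that edge-adjacent squares alternate in color while corner-adjacent squares agree, which is already established by the two-coloring in Figure \ref{fig:redblack}.
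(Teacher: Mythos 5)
Your proof is correct and follows essentially the same route as the paper: the paper likewise reduces $(\nabla \Psih^{Q_R},\nabla\Psih^{Q_K})$ to $(\curlh \Psih^{Q_R},\curlh\Psih^{Q_K})$ via the polarization of Lemma \ref{lem:normdecom} and the divergence-free property, and then observes that the five-square support of $\curlh\Psih^{Q}$ consists entirely of squares of the same color as $Q$, so the red and black supports are disjoint. Your explicit justification of the coloring step (corner-adjacent squares share a color) is a welcome elaboration of what the paper leaves to Figure \ref{fig:RKdonot}.
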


\begin{figure}[ht]
  \hspace{3.6cm}
\includegraphics[width=8.5cm]{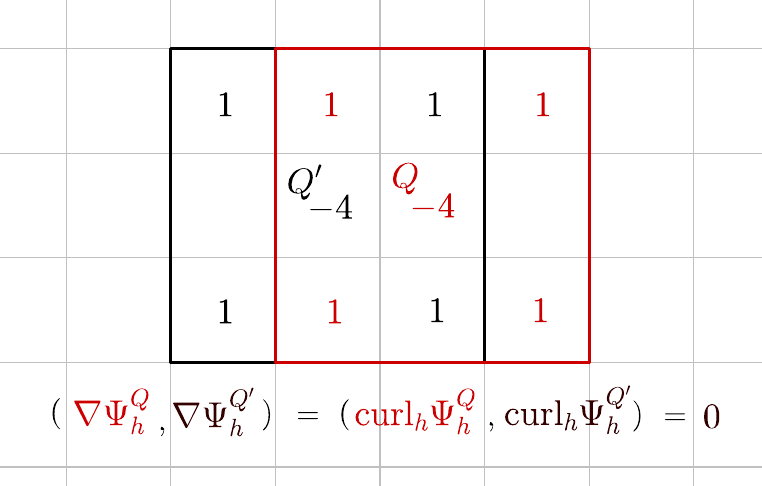}
\caption{${\color{red}\curlh\Psih^{Q}}$ for red square {$\color{red} Q$},
 $\curlh\Psih^{Q'}$ for black square  $Q'$ : They do not meet.}
\label{fig:RKdonot}
\end{figure}

As a result, when we implement the finite element method to solve \eqref{eq:femellip}
with the basis $\B$ for $V_h$,
the inner products $(\nabla \Psih^{Q} , \nabla \Psih^{Q'})$
vanish except at most 13 $Q'$  for a fixed interior square $Q\in\Th$, 
as in Figure \ref{fig:13stencil}.
 Thus
the sparsity of the system of linear equations 
is not as large as expected from the support of $\Psih^{Q}$.

\begin{figure}[ht]
\hspace{41mm}
\includegraphics[width=6.8cm]{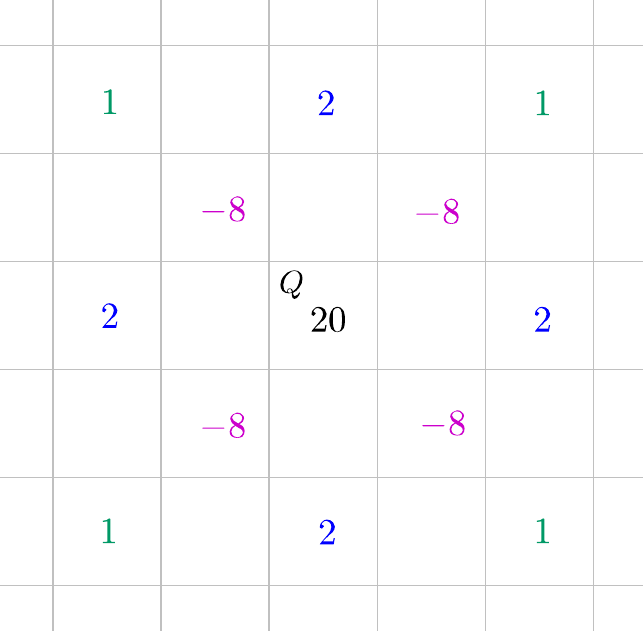}
\caption{ 13 nonzero ($\nabla\Psih^{Q},\nabla\Psih^{Q'})$ for fixed $Q$ }
\label{fig:13stencil}
\end{figure}

Furthermore, by Lemma \ref{lem:qrqksplit},
 the system of linear equations to solve \eqref{eq:femellip}
with the basis $\B$ for $V_h$
can be split into two smaller ones with the following $\B_{R}, \B_{K}$ for red and black squares,
respectively,
\begin{equation}\label{def:BrBk}
\begin{array}{ccc}
\vspace{2mm}
\B_R &=& \{\Psih^{Q}\ :\  Q \mbox{ is an interior red square in } \Th \},\\
\B_K &=& \{\Psih^{Q}\ :\  Q \mbox{ is an interior black square in } \Th \}.
\end{array}
\end{equation}

$P_1$-nonconforming finite elements  extend to 
a general polygonal domain on triangulations 
into squares and triangles \cite{Altmann}. 
Based on the proposed method, 
we can develop a method to solve velocity first 
for Stokes problems on those mixed meshes.

\subsection{Explicit method for pressure recovery}\label{sec:press}
We will find the discrete pressure   $p_h\in \Mh$ in \eqref{eq:dStokes}
by an explicit method using the a priori obtained discrete velocity 
$\u_h\in V_h$ in \eqref{eq:dStokes} 
through the elliptic problem \eqref{eq:femellip}.

Let's consider the problem (P): 
Find $a_h\in \Mh$ such that
\begin{equation*}\label{eq:phnchz}
 (a_h, \divh\vh)= (\nabla\u_h, \nabla\vh)-(\f, \vh) ,\quad \forall \vh\in \NChzvec.
\end{equation*}
By Lemma \ref{lem:rangedivh},  $p_h\in \Mh$ in \eqref{eq:dStokes}
is the unique solution of the problem (P).

Define two checkerboard functions $\chi_{R},\chi_{K} \in \Mhz$ by 
\begin{equation*}
\chi_R({Q}) = \left\{
\begin{array}{cl} 
1,\quad &\mbox{ if } Q \mbox{ is a red square,} \\
0,\quad &\mbox{ if } Q \mbox{ is a black square,} 
\end{array}\right.
\quad 
\chi_K({Q}) = \left\{
\begin{array}{cl} 
0,\quad &\mbox{ if } Q \mbox{ is a red square,} \\
1,\quad &\mbox{ if } Q \mbox{ is a black square.} 
\end{array}\right.
\end{equation*}
We have, for all $\v_h\in \Xh$,
\begin{equation}\label{eq:chirchik}
(\chi_R,\divh \v_h)=(\chi_K,\divh \v_h)=(\chi_R,\chi_K)=0.
\end{equation}

Let's fix a red  square $Q_R$, a black one $Q_K$ and define
$\hat{p_h}\in\Mhz$ by
\begin{equation*}\label{eq:defhatph}
\hat{p_h} = p_h - C_R \chi_R - C_k\chi_K,
\end{equation*}
for two constants $C_R=p_h({Q_R}), C_K=p_h({Q_K})$. 
Then, by \eqref{eq:chirchik}, $\hat{p_h}\in\Mhz$
satisfies

\begin{equation}\label{eq:phnchz2}
\begin{array}{c}
\vspace{2mm}
 (\hat{p_h}, \divh\vh)= (\nabla\u_h, \nabla\vh)-(\f, \vh) ,\quad \forall \vh\in \NChzvec,\\
\hat{p_h}({Q_R}) = \hat{p_h}({Q_K}) =0.
\end{array}
\end{equation}

We will find $\hat{p_h}$ satisfying \eqref{eq:phnchz2} by an explicit method.
If other red square $Q_R'$ meets with $Q_R$
 at an interior vertex, 
there is a function $\v_h \in[\NChz]^2$  as in \eqref{eq:examfqa} such that
\begin{equation}\label{eq:examfqa2}
 \divh \v_h({Q_R'})=1,\quad  \divh \v_h({Q_R})=-1,\quad  
\divh \v_h (Q) =0 \mbox{ for all other } Q\in\Th.
\end{equation}
Then, from \eqref{eq:phnchz2} and \eqref{eq:examfqa2},
$\hat{p_h}$ at  $Q_R'$ is determined by
\[ \int_{Q_R'} \hat{p_h}\ d\sigma =  \int_{Q_R} \hph\ d\sigma 
+   (\nabla\u_h, \nabla\vh) -(\f, \vh). \]
In this manner, 
we can find $\hph$ at all red squares by an explicit telescoping methods
using such simple $\divh\v_h$ as in \eqref{eq:examfqa2},
since all red ones are connected through interior vertices 
as in the proof of Theorem \ref{thm:dimVh}.
Applying the same telescoping methods for the black squares, 
we get a piecewise constant function $\hat{p_h}\in\Mhz$ 
satisfying \eqref{eq:phnchz2}.

Then, we can obtain the unique solution $p_h\in\Mh$ of the problem (P) so that
 $$p_h=\hat{p_h}-D_R\chi_R-D_K\chi_K,$$
where $D_R$ and $D_K$ are constants such that
\[ D_R=\frac{(\hat{p_h}, \chi_R)}{(\chi_R,\chi_R)}, \quad
 D_K=\frac{(\hat{p_h}, \chi_K)}{(\chi_K,\chi_K)}. \]

\section{Numerical results}
We chose the velocity $\u$ and pressure $p$  on $\Omega=[0,1]^2$ 
for numerical tests, as
\[\u=(\phi_y, -\phi_x),\quad  p=\sin(4\pi x) e^{\pi y},\]
where $\phi$ is the  stream function such that
\[\phi(x,y)=\sin(2\pi x)\sin(3\pi y) (x^3-x) (y^2-y). \]
The discrete velocity $\u_h$ is the sum of 
two solutions of the problems \eqref{eq:femellip2} in $V_h$
with two separable bases $\B_R$ and $\B_K$ in \eqref{def:BrBk}
for red and black squares, respectively.

The cardinalities of $\B_R$ and $\B_K$ are same and much smaller than
the dimension of the entire space $\Xh \times \Mh$ as in Table \ref{Table:dimen}. 
Each system of linear equations with $\B_R$ and $\B_K$ was solved by a direct method
based on the Cholesky decomposition.

The condition numbers of the elliptic problem \eqref{eq:femellip2} 
in $\|\cdot\|_\infty$ norm increase
with the order of $O(h^{-4})$ listed in Table  \ref{Table:cond} 
as well as those of the saddle point problem \eqref{eq:dStokes}.
The structures of non-zero entries in the concerning matrices are 
depicted in Figure \ref{fig:nzero} over $16\times 16$ mesh
with the lexicographical basis numbering.

After solving $\u_h$, we obtained the discrete pressure $p_h$
by the explicit method suggested in subsection \ref{sec:press}.
The numerical results in  Table \ref{Table:divfree} 
show the optimal order of error decay expected in \eqref{eq:errorbound}.

\begin{table}[hb]
\begin{center}
\begin{tabular}{|c|c|c|c|c|}\hline 
mesh & $\dim(\mbox{Span }\B_R) = \dim(\mbox{Span }\B_K)$ 
& $\dim\big(\Xh \times \Mh\big)$  \\\hline
8 x 8          &   18          &  160     \\\hline
16 x 16      &    98          &   704  \\\hline
32 x 32     &  450           &  2,944  \\\hline
64 x 64    &  1,922         &  12,032  \\\hline
128 x 128  & 7,938          & 48,640    \\\hline
256 x 256  & 32,258      & 195,584  \\\hline
512 x 512 &  130,050    & 784,384  \\\hline
1024 x 1024 & 522,242   & 3,141,632 \\\hline
\end{tabular}
\caption{\label{Table:dimen} Dimensions of $\mbox{Span }\B_R, \mbox{Span }\B_K$ 
and  $\Xh \times \Mh$}
\end{center}
\end{table}

\begin{table}
\begin{center}
\begin{tabular}{|c|c|c|c|c|}\hline
\multirow{3}{*}{mesh}     & 
\multicolumn{2}{c|}{ elliptic problem  \eqref{eq:femellip2}}&  
 \multicolumn{2}{c|}{saddle point problem \eqref{eq:dStokes}} \\
&\multicolumn{2}{c|}{ with  $\mbox{Span }\B_R$}&  
 \multicolumn{2}{c|}{ with   $\Xh \times \Mh$} \\
 \cline{2-5}
& condition number & order & condition number & order \\\hline
8 x 8 &8.3345E+1  &  &  1.4573E+5 &  \\\hline
16 x 16& 1.3284E+3   & 15.93  &  2.3375E+6  & 16.03  \\\hline
32 x 32&  2.1235E+4  & 15.98   &  3.7251E+7   & 15.93   \\\hline
64 x 64& 3.3968E+5  &  15.99 &  5.9424E+8   & 15.95 \\\hline
128 x 128&5.4346E+6  & 15.99  &  9.4913E+9  &  15.97 \\\hline
\end{tabular}
\caption{\label{Table:cond} condition numbers in $\|\cdot\|_\infty$ norm  }
\end{center}
\end{table}

\begin{figure}[ht]
\subfigure[Elliptic problem whose rank is $98$ ]{
\includegraphics[width=0.45\textwidth]{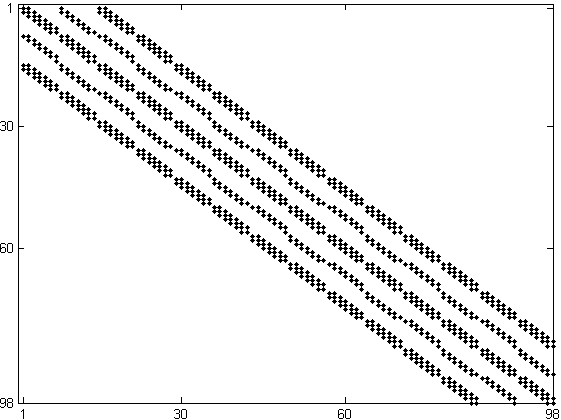}
}
\qquad
\subfigure[Saddle point problem whose rank is $704$]{
\includegraphics[width=0.45\textwidth]{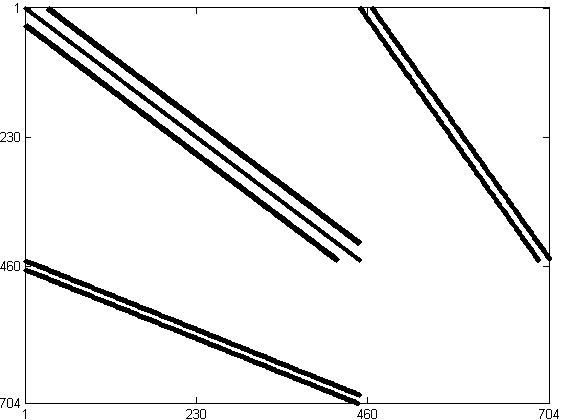}
}
\caption{Structures of non-zero entries in the concerning matrices 
over $16\times 16$ mesh
}\label{fig:nzero}
\end{figure} 

\begin{table}
\begin{center}
\begin{tabular}{|c|c|c|c|c|c|c|}\hline
\multirow{2}{*}{mesh}     & \multicolumn{2}{c|}{$\|\u -\u_h\|_{0}$}&  
 \multicolumn{2}{c|}{$|\u -\u_h|_{1,h}$}&
 \multicolumn{2}{c|}{$\|p -p_h\|_0$} \\ \cline{2-7}
& value & order & value& order&  value & order \\\hline
8 x 8 & 5.6091E-2  &  &  2.1164E+0  &  & 2.9913E+0  &  \\\hline
16 x 16& 1.3451E-2 &2.0601  & 1.0774E+0  &0.9740  & 1.5145E+0  &0.9819  \\\hline
32 x 32 &3.3342E-3  &2.0123  &  5.4115E-1  & 0.9935 &7.6081E-1  & 0.9933 \\\hline
64 x 64 &8.3187E-4  &2.0029  &  2.7088E-1  &0.9984  &3.8088E-1  & 0.9982 \\\hline
128 x 128 &2.0786E-4  &2.0007  & 1.3548E-1  &0.9996  & 1.9050E-1 & 0.9995 \\\hline
256 x 256 &5.1959E-5  & 2.0002 &6.7743E-2   &0.9999  &9.5257E-2  & 0.9999 \\\hline
512 x 512 &1.2990E-5  & 2.0000  & 3.3872E-2  &1.0000  & 4.7630E-2 & 1.0000 \\\hline
1024 x 1024 & 3.2497E-6  & 1.9990  & 1.6936E-2  &1.0000&2.3815E-2  & 1.0000 \\\hline
\end{tabular}
\caption{\label{Table:divfree} Error table for the divergence-free method}
\end{center}
\end{table}

%\section*{Acknowledgments}
%This paper was supported by Konkuk University in 2015.

\end{document}